\newtheorem{lem}{Lemma}
\newtheorem{rem}{Remark}
\theoremstyle{definition}
\newcommand{\AK}[1]{\textcolor{black}{#1}}
\newcommand{\Ale}[1]{\textcolor{black}{#1}}
\newcommand{\F}{\mathbf{F}}
\definecolor{codegreen}{rgb}{0,0.6,0}
\definecolor{codegray}{rgb}{0.5,0.5,0.5}
\definecolor{codepurple}{rgb}{0.58,0,0.82}
\definecolor{backcolour}{rgb}{0.95,0.95,0.92}
\newcommand{\xddots}{%
  \raise 4pt \hbox {.}
  \mkern 6mu
  \raise 1pt \hbox {.}
  \mkern 6mu
  \raise -2pt \hbox {.}
}
\begin{document}
\def\N{{\mathbb N}}
\newcommand{\R}{\mathbb{R}}

\newcommand{\cve}[1]{\textcolor{blue}{\texttt{[VE: #1]}}}
\definecolor{auburn}{rgb}{0.43, 0.21, 0.1}
\newcommand{\fc}[1]{\textcolor{auburn}{\texttt{[FC: #1]}}}
\newcommand{\ve}[1]{\textcolor{blue}{#1}}

\fancyfoot[L]{\color{black}{\Large{C2 - Confidential}}}

\newcommand{\bu}{\boldsymbol{u}}
\newcommand{\bv}{\boldsymbol{v}}
\newcommand{\bphi}{\boldsymbol{\phi}}
\newcommand{\bvarphi}{\boldsymbol{\varphi}}
\newcommand{\bpsi}{\boldsymbol{\psi}}
\newcommand{\bxi}{\boldsymbol{\xi}}
\newcommand{\bzeta}{\boldsymbol{\zeta}}
\newcommand{\btheta}{\boldsymbol{\theta}}
\newcommand{\balpha}{\alpha}
\newcommand{\bw}{\boldsymbol{w}}
\newcommand{\bb}{\boldsymbol{b}}
\newcommand{\bn}{\boldsymbol{n}}
\newcommand{\bId}{\boldsymbol{\rm Id}}
\newcommand{\rL}{\mathrm L}
\newcommand{\rbP}{\boldsymbol{\mathrm P}}
\newcommand{\rS}{\mathrm S}
\newcommand{\bfx}{\boldsymbol{x}}
\newcommand{\f}{\mathbf{f}}

\newpage

\pagestyle{fancy}

\lhead{ }
\chead{}
\rhead{}
\lfoot{}
\cfoot{\thepage }
\rfoot{}
\begin{center}

\begin{huge} Optimal morphings for model-order reduction for poorly reducible problems with geometric variability \end{huge} \\

\begin{large} A. Kabalan$^{1,2}$, F. Casenave$^2$, F. Bordeu$^2$, V. Ehrlacher$^{1,3}$, A. Ern$^{1,3}$ \end{large} \\
$^1$ Cermics, Ecole nationale des ponts et chaussées, 6-8 Av. Blaise Pascal, Champs-sur-Marne, 77455 Marne-la-Vallée cedex 2, FRANCE,\\
$^2$ Safran Tech, Digital Sciences \& Technologies, Magny-Les-Hameaux, 78114, FRANCE, \\
$^3$ Inria Paris, 48 rue Barrault, CS 61534, 75647 Paris cedex, FRANCE. \\
 
\end{center}

\hrule

\vspace*{1pt}

\setstretch{1}

\paragraph{Abstract}    
We propose a new model-order reduction framework to poorly reducible problems arising from parametric partial differential equations with geometric variability. In such problems, the solution manifold exhibits a slowly decaying Kolmogorov $N$-width, \Ale{so that} standard projection-based model order reduction techniques based on linear subspace approximations \Ale{become ineffective}. To overcome this difficulty, we introduce an optimal morphing strategy: For each solution sample, we compute a bijective morphing from a reference domain to the sample domain such that, when all \Ale{the} solution fields are pulled back to the reference domain, their variability is reduced. We formulate a global optimization problem \Ale{on the morphings} that maximizes the energy captured by the first $r$ modes of the mapped fields obtained from Proper Orthogonal Decomposition, thus maximizing the reducibility of the dataset. Finally, using a non-intrusive Gaussian Process regression on the reduced coordinates, we build a fast surrogate model that can accurately predict new solutions, highlighting the practical benefits \Ale{of the proposed approach} for many-query applications. The framework is general, independent of the underlying partial differential equation, and applies to scenarios with either parameterized or non-parameterized geometries.

\section{Introduction}
\subsection{Background}

Proper Orthogonal Decomposition \Ale{(POD)} is a standard approach that extracts a reduced-order basis from high-fidelity snapshots. The efficiency \Ale{this approach} hinges on the assumption that the solution manifold $\mathcal{M}$, defined as the set of all possible solutions to a parametric partial differential equation (PDE), can be well-approximated by a low-dimensional linear space. While this assumption holds in many diffusion-dominated problems, there are important classes of PDEs for which it fails. In particular, transport-dominated or wave propagation problems with moving features exhibit \Ale{poor reducibility}. Recall that, \Ale{for a positive integer $r$,} the Kolmogorov $r$-width defined as
 $$
     d_r(\mathcal{M}) := \inf_{\underset{\rm dim(\mathcal{Z}_r)=r}{\mathcal{Z}_r} } \sup_{u \in \mathcal{M}} \inf_{v \in \mathcal{Z}_r} \|u - v\|_{\mathcal{Z}},
 $$
 characterizes the best-possible approximation of the manifold $\mathcal{M}$ by an $r$-dimensional linear space $\mathcal{Z}_r$. For transport problems, $d_r(\mathcal{M})$ decays only algebraically, implying that any reduced basis requires a large number of modes to achieve \Ale{high} accuracy. In practice, this \Ale{leads to} a very slow decay of POD eigenvalues, and consequently poor compression efficiency. 

Another challenge occurs with parameter-dependent domains, where each \Ale{snapshot} is defined on a different geometry. To address this, one introduces bijective morphing from a common reference domain, pulling back solutions onto the reference domain before applying POD.

While the problems of \Ale{poorly} reducibility and geometric \Ale{variability} can appear of different nature, both are amenable to solutions through morphings. In the first case,  dimensionality reduction can be \Ale{readily} applied, but morphings are necessary to obtain a good approximation space, whereas, in the second case, morphings are inevitable to achieve dimensionality reduction. \Ale{Hence, in both cases, given a family of snapshots $(u_i)_{1\leq i \leq n}$, one seeks a morphing family $(\bphi_i)_{1\leq i \leq n}$ so that the mapped family $(u_i\circ \bphi_i)_{1\leq i \leq n}$ has good reducibility properties.} In Figure \ref{fig: two mappings}, we show two different \Ale{morphings} from a reference geometry onto the same target geometry.

 \begin{figure}[ht] 
     \centering
     \includegraphics[scale=0.5]{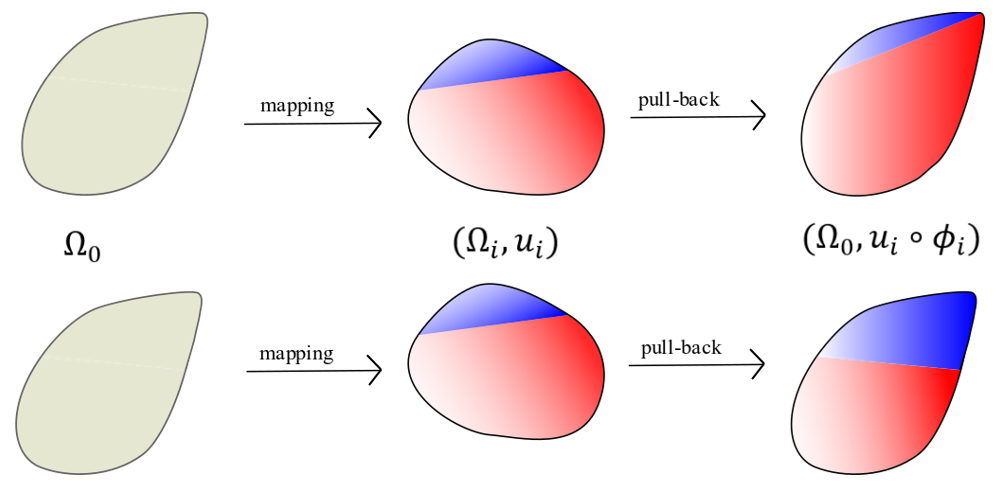}
\caption{ Example of two mappings from the same reference domain onto the same target domain.}
\label{fig: two mappings}
\end{figure}

 A natural question to ask is: \textbf{what are the best possible morphings, to enhance solution reducibility?} We directly address this question by formulating an optimization problem where the unknowns are the morphings $\{\boldsymbol{\phi}_i\}_{1\leq i\leq n}$, and the objective is to maximize the captured variance by the first $r$ POD modes of the transformed snapshots $\{u_i\circ \bphi_i\}_{1\leq i \leq n}$. In other words, we seek morphings that yield the largest possible reduction in dimensionality for a given target accuracy.

In this paper, we propose a new framework that explicitly formulates the search for morphings as a global optimization problem. Given a set of snapshots $(u_i)_{1\leq i\leq n}$ defined on domains $\{\Omega_i\}_{1\leq i\leq n}$, and \Ale{bijective morphings} $(\bphi_i : \Omega_0 \to \Omega_i)_{1\leq i\leq n}$, we define the pulled-back snapshots $(u_i \circ \bphi_i)_{1\leq i\leq n}$ on the reference domain $\Omega_0$. We then construct the correlation matrix
\begin{align} \label{C definition}
C[\Phi]_{ij} = \langle u_i \circ \bphi_i, \, u_j \circ \bphi_j \rangle_{\Omega_0}, \quad 1 \leq i,j \leq n,
\end{align}
and denote by $\lambda_1[\Phi] \geq \cdots \geq \lambda_n[\Phi]$ its eigenvalues. For a parameter $1\leq r \leq n$, our objective is to maximize the POD efficiency functional
\begin{align} \label{J definition}
 \displaystyle J_r[\Phi] = \frac{ \displaystyle\sum_{j=1}^r \lambda_j[\Phi]}{\mathrm{{\rm Tr}}(C[\Phi])}    
\end{align}
that is, to identify the morphing family $\Phi = \{\bphi_i\}_{1 \leq i \leq n}$ that maximizes the energy fraction captured by the first $r$ POD modes.
\subsection{Related works}
\subsubsection{Poorly-reducible problems}
In order to overcome the Kolmogorov barrier, researchers have proposed several strategies. Dictionary-based methods rely on constructing a collection of linear reduced-order models by partitioning different regions in the time/parameter space or in the solution space \cite{buhr2018randomized,nouy2024dictionary,daniel2020model,rapun2010reduced,dihlmann2011model,amsallem2012nonlinear}. Other approaches rely on Grasmaniann learning \cite{mosquera2021generalization,amsallem2008interpolation,mosquera2019pod,zimmermann2018geometric}, autoencoders to construct a low-dimensional embedding \cite{lee2020model,mojgani2021low,fresca2022pod,fresca2021comprehensive,cote2023hamiltonian}, and  neural networks \cite{barnett2023neural,mojgani2023kolmogorov}. Another popular approach that will be used in this \Ale{Thesis}, is to consider a transformation of the snapshots \Ale{to better exploit} dimensionality reduction. Common examples of \Ale{such} methods are freezing \cite{ohlberger2013nonlinear}, implicit feature tracking \cite{mirhoseini2023model}, convex displacement interpolation \cite{cucchiara2024model}, and registration \cite{taddei2020registration,taddei2023compositional,gowrachari2025model}. Other examples where \Ale{similar methods} were successfully applied can also be found in \cite{nair2019transported,welper2017interpolation,welper2020transformed,cagniart2018model}.

In this work, we pursue a complementary direction: We seek a family of \Ale{morphings} $\Phi :=\{\bphi_i\}_{1\leq i \leq n} $ such that \Ale{the mapped family} $\{u_i\circ\bphi_i\}_{1\leq i \leq n}$ is more amenable to data compression using linear compression methods.

\subsubsection{Morphing techniques}
When all the domains $\{\Omega_i\}_{1\leq i \leq n}$ share a common topology, the most common solution in the presence of geometric variabilities is to find, for each domain $\Omega_{i}$, a bijective morphing $\bphi_{i}$, from a fixed reference domain $\Omega_0$ onto $\Omega_{i}$, that is $\bphi_i(\Omega_{0})=\Omega_{i}$, 
and we can then pull back the solution to the reference domain as $u_i \circ \boldsymbol{\phi}_i$. This way, model order reduction techniques are applied on the transformed solution manifold $\mathcal{M}_{\Phi}:=\{u_{i}\circ \bphi_{i} \}_{1 \leq i \leq n}$. While finding such morphings is not straightforward, many researchers in the reduced-order modeling community and other related fields proposed methods to efficiently compute morphings to transform geometrical domains \cite{  sederberg1986free,baker2002mesh,beg2005computing,de2007mesh,masud2007adaptive,shontz2012robust,staten2012comparison,de2016optimization}, and how they can be applied to problems in reduced-order modeling \cite{casenave2024mmgp,ye2024data,sieger2014rbf,taddei2020registration,taddei2023compositional,rozza2008reduced,kabalan2025elasticity}.

\subsection{Contribution}
The main contributions of this paper are as follows. First, we introduce the global optimization problem for the computation of the optimal morphings, providing a systematic criterion for enhancing reducibility. Second, we design an gradient ascent algorithm to solve the optimization problem, employing \Ale{elasticity-based} inner products that regularize the updates. \Ale{Moreover, the functional $J_r$ is augmented by a penalty term based on the hyperelastic energy of the displacement fields associated to the morphings in order to preserve the invertibility of the morphings} and guarantee smooth deformations of the domains. Finally, after morphing optimization and POD reduction, we construct a non-intrusive surrogate model using Gaussian Process regression on the reduced coordinates, which enables fast prediction of new solutions.

The framework developed herein is general and independent of the underlying PDE. It addresses two major obstacles in projection-based model-order reduction: the slow decay of the Kolmogorov $N$-width and the intrinsic variability of geometric domains. 


\section{Methodology}
 In this section, we formulate the optimization problem to find the morphing family $\{\bphi_i\}_{1\leq i \leq n}$ and the building blocks to solve this optimization problem.

\subsection{ Problem setting and notation}

Let $\{\Omega_i\}_{1\leq i \leq n} \subset \mathbb{R}^d$  be a family of $n$ spatial domains, with $d=2$ or $3$, representing different geometric configurations in \Ale{the model} problem. Assume that all \Ale{the} domains share the same topology.  Let $\Omega_0$ be a chosen reference domain, homeomorphic to each $\Omega_i$.  $\Omega_0$ could be one of the actual domains (say $\Omega_1$) or a suitable reference shape. For all $1 \leq i\leq n$, let $u_i$ be a solution field $u_i:\Omega_i \to \mathbb{R}$ (for simplicity, we consider a scalar field; extension to vector fields is straightforward). Each $u_i$ is \Ale{meant to be} high-fidelity solution of a parametric PDE corresponding to some parameter $\mu_i \in \mathcal{P}$, i.e., $u_i = u(\cdot; \mu_i)$. We make no particular assumptions on the governing PDE here.\\

Our aim is to find a morphing family $\Phi := (\bphi_i)_{1 \leq i\leq n}$ so that \Ale{each morphing $\bphi_i$} transforms the reference domain $\Omega_0$ onto each target domain $\Omega_i$, i.e. $\bphi(\Omega_0)=\Omega_i$ for all $1\leq i \leq n$. The morphing $\bphi_i: \Omega_0 \to \Omega_i$ is required to be a bijective, smooth mapping (a diffeomorphism) satisfying $\bphi_i(\Omega_0) = \Omega_i$. While such morphings always exist in theory since the domains are topologically equivalent, they are not unique. We therefore have \Ale{given some} freedom to choose $\bphi_i$ to serve our purpose: \Ale{which} is finding $(\bphi_i)_{1\leq i \leq n}$ so that the family $\{u_i\circ\bphi_i\}_{1\leq i \leq n}$ can be well approximated using POD. Once the \Ale{family} $(\bphi_i)_{1 \leq i \leq n}$ is determined, we can \Ale{pullback} the solution fields $(u_i)_{1\leq i \leq n }$ to the reference domain by defining $u^{\rm ref}_i:\Omega_0 \to \mathbb{R}$ for all $1\leq i \leq n$ as 
\begin{align}\label{eq: u ref}
    u^{\rm ref}(\boldsymbol{x}) := u_i\circ \bphi_i(\boldsymbol{x}), \quad \forall \boldsymbol{x} \in \Omega_0.
\end{align}

Let $\displaystyle \langle f, g\rangle_{\Omega_0} := \int_{\Omega_0} f(\boldsymbol{x})g(\boldsymbol{x})\,d\boldsymbol{x}$ denote the \Ale{canonical} $L^2$-inner product on $\Omega_0$ (or a suitable weighted inner product relevant to the PDE), \Ale{and let $\|\cdot\|_{\Omega_0}$ denote the associated norm.}. Define $\mathbf{M}_i$ as the \Ale{set} of bijective morphings from $\Omega_0$ to $\Omega_i$ and let $\mathbf{M}:=\mathbf{M}_1 \times \mathbf{M}_2 \times \cdots \times \mathbf{M}_n$.\\
\Ale{Let $\Phi \in \mathbf{M}$}. We define the correlation matrix for the transformed snapshots, $C$, on $ \mathbf{M}$ as $C[\Phi] \in S_n^+$,  where $\mathcal S_n^+$ is the set of symmetric positive semi-definite matrices of size $n \times n$, such that
$$C[\Phi]_{ij} \;:=\; \langle\,u^{\rm ref}_i,\;u^{\rm ref}_j\,\rangle_{\Omega_0} , \quad 1\le i,j\le n.$$
We denote by $\lambda_1[\Phi]\ge \lambda_2[\Phi]\ge \cdots \ge \lambda_n[\Phi]\ge 0$, (counting multiplicity), the eigenvalues of $C[\Phi]$, and by $\{\displaystyle \zeta_1[\Phi] , \zeta_2[\Phi] , \cdots , \zeta_n[\Phi]\}\subset \mathbb{R}^n$ an orthonormal family of corresponding eigenvectors. The eigenvalue $\lambda_k[\Phi]$ corresponds to the energy captured by the $k$-th POD mode. \Ale{For all $1\leq i,k\leq n$, let $\zeta_{i,k}[\Phi]$ to be the $k$-correspondence of $\zeta_{i}$ in the canonical basis of $\R^n$.}

For a parameter $1\leq r\leq n$, our objective is to choose the family of morphing $\Phi$ so that the first $r$ POD modes capture as much as possible of the total energy. Thus, we select $\Phi_r^{\rm opt}$ so that 
\begin{align}\label{optimization statement}
    \Phi^{\rm opt}_r \in \arg \max_{\Phi \in \mathbf{M}} J_{ r}[\Phi],
\end{align} 
where the functional $\displaystyle J_r$ is as defined in \eqref{J definition}.  In other words, we seek morphings that maximize the energy fraction captured by the first $r$ modes.

The optimization problem \eqref{optimization statement} is non-trivial: \Ale{even after discretization}, the dimension of the search space $\mathbf{M}$ is very large, and $J_r[\Phi]$ is a non-convex functional. Thus, directly optimizing over all possible morphings is untractable. In what follows, we present the building blocks to solve \eqref{optimization statement}.

\subsection{Gradient algorithm}

\Ale{The first idea is} apply a gradient ascent method (for maximization) by iteratively updating the morphings in the direction of steepest ascent of the objective. \Ale{One important ingredient is the choice of the inner product to define the gradient, i.e.~the Riesz representative of the differential of the cost function.} In the present setting, an additional challenge is the need to \Ale{ensure that all the morphings remain} smooth and bijective at each iteration. 
\Ale{We will see in Section \ref{sec: bijectivity} that this entails adding an hyperelasticity penalty term to the cost functional $J_r$ defined in \eqref{J definition}.}

\subsubsection*{Differential of $J_r$}

We begin by formulating the first variation of the objective functional. Given a current guess $\Phi \in \mathbf{M}$ and a perturbation (variation) $\Psi = (\bpsi_i)_{1\leq i \leq n}$ in the set of admissible directions, the \textit{Gateaux derivative} of $J_r$ at $\Phi$ in the direction of $\Psi$ defined by:
\begin{align}\label{differetial of J def}
  D J_r[\Phi][\Psi] := \lim_{\epsilon\to 0} \frac{J_r[\Phi + \epsilon \Psi] - J_r[\Phi]}{\epsilon}\,.  
\end{align}
\begin{lem} \label{lem : diff J}
Let $\delta_{ij}$ the Kronecker delta. \Ale{Define $\mathbf{Z}_{ij}[\Phi][\Psi]:= \langle u_i \circ \bphi_i , (\Vec{\nabla} u_j \circ \bphi_j)\cdot \bpsi_j\rangle_{\Omega_0}$}. Let $\Phi\in \mathbf{M}$, and let $\Psi:= (\bpsi_i)_{1\leq i \leq n}$ a perturbation. The differential \Ale{of $J_r$ in the direction of $\Psi$} is computed explicitly as
    \begin{align}
    D J_r[\Phi][\Psi] &=\sum_{i=1}^n\sum_{j=1}^n \sum_{k=1}^r \left( \frac{ 2\zeta_{k,i}[\Phi]\zeta_{k,j}[\Phi] }{{\rm Tr} (C[\Phi]) } -\frac{ \displaystyle 2\lambda_k[\Phi]}{{\rm Tr} (C[\Phi])^2 }\delta_{ij}\right)\mathbf{Z}_{ji}[\Phi][\Psi]  .
\end{align}
\end{lem}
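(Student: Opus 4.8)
The plan is to differentiate $J_r[\Phi] = \bigl(\sum_{k=1}^r \lambda_k[\Phi]\bigr)/{\rm Tr}(C[\Phi])$ by the quotient rule, which requires three ingredients: the derivative of the entries of $C[\Phi]$, the derivative of its trace, and the derivative of the partial eigenvalue sum $\sum_{k=1}^r \lambda_k[\Phi]$. First I would compute $D C_{ij}[\Phi][\Psi]$. Since $C_{ij}[\Phi] = \langle u_i\circ\bphi_i,\, u_j\circ\bphi_j\rangle_{\Omega_0}$, perturbing $\bphi_j \mapsto \bphi_j + \epsilon\bpsi_j$ and using the chain rule $\frac{d}{d\epsilon}\bigl(u_j\circ(\bphi_j+\epsilon\bpsi_j)\bigr)\big|_{\epsilon=0} = (\vec\nabla u_j\circ\bphi_j)\cdot\bpsi_j$, bilinearity of the inner product gives
\begin{align}
D C_{ij}[\Phi][\Psi] = \langle (\vec\nabla u_i\circ\bphi_i)\cdot\bpsi_i,\, u_j\circ\bphi_j\rangle_{\Omega_0} + \langle u_i\circ\bphi_i,\, (\vec\nabla u_j\circ\bphi_j)\cdot\bpsi_j\rangle_{\Omega_0} = \mathbf{Z}_{ji}[\Phi][\Psi] + \mathbf{Z}_{ij}[\Phi][\Psi].
\end{align}
Taking $i=j$ and summing, $D\,{\rm Tr}(C[\Phi])[\Psi] = \sum_{i=1}^n 2\,\mathbf{Z}_{ii}[\Phi][\Psi]$.

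Next I would handle $\sum_{k=1}^r \lambda_k[\Phi]$. The clean way is to invoke the variational (Ky Fan) characterization $\sum_{k=1}^r \lambda_k[\Phi] = \max\{{\rm Tr}(U^\top C[\Phi] U) : U\in\R^{n\times r},\ U^\top U = I_r\}$, whose maximizer is $U = [\zeta_1[\Phi]\,|\cdots|\,\zeta_r[\Phi]]$. By the envelope theorem (Danskin), assuming the first $r$ eigenvalues stay separated from the rest so the top-$r$ eigenspace varies smoothly, the derivative is obtained by differentiating through the fixed optimizer: $D\bigl(\sum_{k=1}^r\lambda_k[\Phi]\bigr)[\Psi] = {\rm Tr}\bigl(U^\top (DC[\Phi][\Psi]) U\bigr) = \sum_{k=1}^r \zeta_k[\Phi]^\top (DC[\Phi][\Psi])\,\zeta_k[\Phi]$. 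Alternatively, one can differentiate each simple eigenvalue individually via $D\lambda_k = \zeta_k^\top (DC)\zeta_k$ (standard first-order perturbation theory for symmetric matrices) and sum; this avoids the envelope argument but needs simplicity of the eigenvalues. Writing the quadratic form out in coordinates, $\zeta_k^\top (DC[\Phi][\Psi])\zeta_k = \sum_{i,j} \zeta_{k,i}\zeta_{k,j}\,(\mathbf{Z}_{ji} + \mathbf{Z}_{ij})$, and by symmetry of the double sum in $i\leftrightarrow j$ this equals $2\sum_{i,j}\zeta_{k,i}\zeta_{k,j}\,\mathbf{Z}_{ji}[\Phi][\Psi]$.

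Finally I would assemble via the quotient rule: with $A := \sum_{k=1}^r\lambda_k$ and $B := {\rm Tr}(C)$,
\begin{align}
D J_r = \frac{(DA)\,B - A\,(DB)}{B^2} = \frac{DA}{B} - \frac{A}{B^2}\,DB = \sum_{k=1}^r \frac{2\sum_{i,j}\zeta_{k,i}\zeta_{k,j}\mathbf{Z}_{ji}}{{\rm Tr}(C)} - \frac{\sum_{k=1}^r\lambda_k}{{\rm Tr}(C)^2}\sum_i 2\mathbf{Z}_{ii}.
\end{align}
To match the stated form, rewrite the second term's $\sum_i 2\mathbf{Z}_{ii}$ as $\sum_{i,j}2\delta_{ij}\mathbf{Z}_{ji}$ and pull the sum over $k$ inside using $\sum_{k=1}^r\lambda_k\,\delta_{ij} = \sum_{k=1}^r \lambda_k\delta_{ij}$; grouping the two contributions under a common triple sum $\sum_i\sum_j\sum_k$ yields exactly the claimed expression. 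The main obstacle is the differentiability of $\sum_{k=1}^r\lambda_k[\Phi]$: individual eigenvalues are only differentiable where simple, and the partial sum is differentiable under the weaker hypothesis that $\lambda_r[\Phi] > \lambda_{r+1}[\Phi]$ (a spectral gap at the cutoff). I would state this genericity/gap assumption explicitly — it is the natural regularity condition under which the formula holds — and note that the Ky Fan variational form plus the envelope theorem is the cleanest route, since it requires only the gap at index $r$ and not simplicity of all eigenvalues; the rest is bilinearity of $\langle\cdot,\cdot\rangle_{\Omega_0}$, the chain rule, and bookkeeping of indices.
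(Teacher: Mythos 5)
Your proposal is correct and follows essentially the same route as the paper: a first-order chain-rule expansion giving $DC_{ij}=\mathbf{Z}_{ij}+\mathbf{Z}_{ji}$, the standard perturbation identity $D\lambda_k=\zeta_k^{\mathrm T}(DC)\zeta_k$, the quotient rule, and the same index bookkeeping with the Kronecker delta. The only difference is cosmetic — you offer the Ky Fan/envelope argument as an alternative justification and explicitly flag the spectral-gap (or simplicity) hypothesis needed for differentiability of the eigenvalue sum, a regularity condition the paper's proof uses implicitly without stating.
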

\begin{proof}

We have, for all $1\leq i \leq n$, and all $\boldsymbol{x} \in \Omega_0$
\begin{align*}
     \hspace{0.5em} u_i \circ (\bphi_i + \epsilon \bpsi_i)(\boldsymbol{x}) &= u_i(\bphi_i(\boldsymbol{x}) + \epsilon \bpsi_i(\boldsymbol{x}))\\
    &\simeq (u_i\circ\bphi_i)(\boldsymbol{x}) + \epsilon \left((\Vec{\nabla} u_i\circ\bphi_i)\cdot \bpsi_i\right)(\boldsymbol{x}),
\end{align*}
where we \Ale{neglected} higher-order terms \Ale{in $\epsilon$}. Next, we evaluate 
\begin{flalign}\label{Cij differential}
\displaystyle  C_{ij}[\Phi + \epsilon \Psi] & = \langle u_i \circ (\bphi_i + \epsilon \bpsi_i), u_j \circ (\bphi_j + \epsilon \bpsi_j) \rangle_{\Omega_0} &&\nonumber\\
 &= \int_{\Omega_0} \left(u_i \circ (\bphi_i + \epsilon \bpsi_i)\right)(\boldsymbol{x})  (u_j \circ (\bphi_j + \epsilon \bpsi_j))(\boldsymbol{x}) d\boldsymbol{x} &&\nonumber\\
 &=\int_{\Omega_0} (u_i \circ \bphi_i)(\boldsymbol{x}) ( u_j \circ \bphi_j)(\boldsymbol{x}) d\boldsymbol{x} + \epsilon\int_{\Omega_0}(u_i \circ \bphi_i)(\boldsymbol{x})  \left(\Vec{\nabla} u_j \circ \bphi_j\right)(\boldsymbol{x})\cdot \bpsi_j(\boldsymbol{x})d\boldsymbol{x} &\nonumber\\
 &{\color{white}=}+\epsilon\int_{\Omega_0}  \left(\Vec{\nabla} u_i\circ \bphi_i\right)(\boldsymbol{x})\cdot \bpsi_i(\boldsymbol{x}) ( u_j \circ \bphi_j)(\boldsymbol{x}) d\boldsymbol{x} +  o(\epsilon)\nonumber.&&
\end{flalign}
This proves that $D C_{ij}[\Phi][\Psi] = \mathbf{Z}_{ij}[\Phi][\Psi] + \mathbf{Z}_{ji}[\Phi][\Psi]$, and we set $D C[\Phi][\Psi]=(D C_{ij}[\Phi][\Psi])_{1\leq i,j\leq n}$.

 \Ale{We now} evaluate \AK{the differential of $\lambda_k[\Phi]$ in the direction of $\Psi$,} denoted as $D \lambda_k[\Phi][\Psi]$. We have, for all $1\leq k \leq n$, 
\begin{align*}
    ||\zeta_k[\Phi] ||^2=1.
\end{align*}
By taking the differential \Ale{of this identity in the direction of $\Psi$}, we get 
\begin{align*}
    \langle D \zeta_k[\Phi][\Psi] , \zeta_k[\Phi] \rangle = 0.
\end{align*}
Differentiating now the identity $C[\Phi]\zeta_k[\Phi]= \lambda_k[\Phi] \zeta_k[\Phi] $, we obtain 
\begin{align*}
    D C[\Phi][\Psi] \zeta_k[\Phi] + C[\Phi]D \zeta_k[\Phi][\Psi] = D \lambda_k[\Phi][\Psi]  \zeta_k[\Phi] + \lambda_k[\Phi] D \zeta_k[\Phi][\Psi].
\end{align*}
We \Ale{take the inner product of} the last equation with $\zeta_k[\Phi]$
\begin{align*}
   (\zeta_k[\Phi])^TD C[\Phi][\Psi] \zeta_k[\Phi] + 0 = D \lambda_k[\Phi][\Psi] +0 .
\end{align*}
Thus, we have for all $1\leq k \leq n$
\begin{align*}
    \lambda_k[\Phi + \epsilon \Psi][\Psi] = \lambda_k[\Phi][\Psi] + (\zeta_k[\Phi])^{\mathrm T} DC[\Phi][\Psi] \zeta_k[\Phi].
\end{align*}
Taking the sum over the first $r$ eigenvalues, we obtain: 
\begin{align}
    \displaystyle \sum_{j=1}^r \lambda_j[\Phi + \epsilon \Psi][\Psi] = \displaystyle \sum_{j=1}^r \lambda_j[\Phi][\Psi] + \epsilon {\rm Tr}( (\mathrm{Z}_r[\Phi])^T D C[\Phi][\Psi] \mathrm{Z}_r[\Phi] ).
\end{align}
with $\mathrm{Z}_r[\Phi]= (\zeta_1[\Phi], \zeta_2[\Phi], \cdots, \zeta_r[\Phi] )^T$. Now we can evaluate \eqref{differetial of J def} to obtain 
\begin{align}\label{differential J general expression}
     D J_r[\Phi][\Psi] =  \frac{{\rm Tr}( (\mathrm{Z}_r[\Phi])^T D C[\Phi][\Psi] \mathrm{Z}_r[\Phi] )}{{\rm Tr} (C[\Phi]) } - \frac{ \displaystyle \sum_{k=1}^r \lambda_k[\Phi]}{{\rm Tr} (C[\Phi])^2 } \times {\rm Tr} (D C[\Phi][\Psi]) ,
\end{align}
which can be written explicitly as
\begin{flalign*}
    D J_r[\Phi][\Psi] &= \frac{2}{{\rm Tr}(C[\Phi])}\sum_{i=1}^n \sum_{j=1}^n \sum_{k=1}^r  \zeta_{k,i}[\Phi] \zeta_{k,j}[\Phi] \displaystyle \int_{\Omega_0} (u_j \circ \bphi_j)(\boldsymbol{x}) ( \Vec{\nabla} u_i \circ \bphi_i)(\boldsymbol{x})\cdot \bpsi_i(\boldsymbol{x})d\boldsymbol{x} &&\\
    &- \frac{ \displaystyle 2\sum_{k=1}^r \lambda_k[\Phi]}{{\rm Tr} (C[\Phi])^2 } \sum_{i=1}^n \int_{\Omega_0} (u_j \circ \bphi_j)(\boldsymbol{x}) ( \Vec{\nabla} u_i \circ \bphi_i)(\boldsymbol{x})\cdot \bpsi_i(\boldsymbol{x})d\boldsymbol{x} \\
    &=\sum_{i=1}^n \left( \sum_{j=1}^n \sum_{k=1}^r  \frac{ 2\zeta_{k,i}[\Phi]\zeta_{k,j}[\Phi] }{Tr (C_[\Phi]) }\mathbf{Z}_{ji}[\Phi][\Psi] -  \sum_{k=1}^r \frac{ \displaystyle 2 \lambda_k[\Phi]}{Tr (C_[\Phi])^2 } \mathbf{Z}_{ji}[\Phi][\Psi] \right) .\\
\end{flalign*} 
\AK{We can rewrite the term }
\begin{align*}
    &\sum_{j=1}^n \sum_{k=1}^r  \frac{ 2\zeta_{k,i}[\Phi]\zeta_{k,j}[\Phi] }{{\rm Tr}  (C_[\Phi]) } -  \sum_{k=1}^r \frac{ \displaystyle 2 \lambda_k[\Phi]}{{\rm Tr}  (C_[\Phi])^2 } \\
    &= \sum_{j=1 \atop j\neq i}^n \sum_{k=1}^r \frac{ 2\zeta_{k,i}[\Phi]\zeta_{k,j}[\Phi] }{{\rm Tr}  (C_[\Phi]) } + \sum_{k=1}^r \Big(\frac{2{\zeta_{k,i}[\Phi]}^2}{{\rm Tr} (C_[\Phi])} - \frac{ \displaystyle 2 \lambda_k[\Phi]}{{\rm Tr}  (C_[\Phi])^2 } \Big)\\ 
    &=\sum_{j=1}^n \sum_{k=1}^r \left( \frac{ 2\zeta_{k,i}[\Phi]\zeta_{k,j}[\Phi] }{{\rm Tr} (C[\Phi]) } -\frac{ \displaystyle 2\lambda_k[\Phi]}{{\rm Tr} (C[\Phi])^2 }\delta_{ij}\right)
\end{align*}
with $\delta_{ij}$ is the Kronecker delta. Finlay, we obtain 
    \begin{align}
    D J_r[\Phi][\Psi] &=\sum_{i=1}^n\sum_{j=1}^n \sum_{k=1}^r \left( \frac{ 2\zeta_{k,i}[\Phi]\zeta_{k,j}[\Phi] }{{\rm Tr} (C[\Phi]) } -\frac{ \displaystyle 2\lambda_k[\Phi]}{{\rm Tr} (C[\Phi])^2 }\delta_{ij}\right)\mathbf{Z}_{ji}[\Phi][\Psi]  .
\end{align}

\end{proof}
We introduce the \Ale{following} shorthand notation:
\begin{align}\label{f_i expression}
\f_i [\Phi] :=\sum_{j=1}^n \sum_{k=1}^r \left( \frac{ 2\zeta_{k,i}[\Phi]\zeta_{k,j}[\Phi] }{{\rm Tr} (C[\Phi]) } -\frac{ \displaystyle 2\lambda_k[\Phi]}{{\rm Tr} (C[\Phi])^2 }\delta_{ij}\right) (u_j \circ \bphi_j)(\boldsymbol{x})  (\Vec{\nabla} u_i\circ\bphi_i)(\boldsymbol{x}),    
\end{align}
and $\F[\Phi]:=(\f_1[\Phi], \cdots , \f_n[\Phi])$. Then, Lemma \ref{lem : diff J} can be written as 

\begin{align} \label{eq:J_diff}
D J_r[\Phi][\Psi] = \sum_i^n \langle f_i[\Phi],\bpsi_i\rangle_{\Omega_0}.    
\end{align}

\subsubsection*{Riesz representation}

\Ale{The next step is to transform the differential of $J_r$ into a gradient by choosing a suitable inner product, say $\big\langle\cdot,\cdot\big\rangle_\mathcal{H}$, where $\mathcal{H}$ is some embedding Hilbert space. The gradient of $J_r$ is then the Riesz representative of $DJ_r$ using the chosen inner product, i.e.~we have }

\begin{align}
     \big\langle \nabla J_r[\Phi],\,\Psi\big\rangle_\mathcal{H} \;=\; D J_r[\Phi][\Psi]\,.
\end{align}




Once the gradient $\nabla J_r[\Phi]$ is available, \Ale{a minimizer of the functional $J_r$ can be sought by a} steepest ascent algorithm. Starting from an initial guess $\Phi^{(0)} = (\bphi_i^{(0)})_{1\leq i \leq n} \in \mathbf{M}$ (see Remark \ref{Initialization}), we update the morphings in the ascent direction. Specifically, at iteration $m>0$, we compute the gradient $\nabla J_r[\Phi^{(m)}]:= (\bu_i^{(m)})_{1\leq i\leq n}  \subset \mathcal{H}^{(m)} $, as the Riesz representative of the differential at $\Phi^{(m)}$, and then update each morphing \Ale{as}
\begin{align} \label{gradient update}
\bphi_i^{(m+1)} \;=\; \bphi_i^{(m)} \;+\; \epsilon^{(m)}\, \bu_i^{(m)}\,, \quad \forall \, 1\leq i \leq n.     
\end{align}
Here, $\epsilon^{(m)}>0$ is a step-size parameter taken sufficiently small to maintain stability of the iterative process and admissibility \Ale{(bijectivity)} of the updated morphings $\bphi_i^{(m+1)}$. Because $J_r$ is non-convex, the steepest ascent method does not guarantee a global \Ale{minimizer}. However, it provides a tractable approach to iteratively improve \Ale{the value of} $J_r$ and is used as the backbone of our algorithm.
\begin{rem}[Initialization]\label{Initialization}
    A crucial prerequisite for \Ale{the above} algorithm is \Ale{an} initial morphing family $\Phi^{(0)}$ that lies in $\mathbf{M}$, the set of admissible (bijective) maps. That is, we require an initial guess consisting of morphings $\bphi_i^{(0)}$ such that $\bphi_i^{(0)}(\Omega_0)=\Omega_i$, for all $1\leq i \leq n$. Computing such morphings between arbitrary geometries is itself a non-trivial task that can be accomplished by existing morphing or registration techniques. In \Ale{what follows}, we use either RBF morphing \cite{de2007mesh} or elasticity-based morphing \cite{kabalan2025elasticity}. RBF morphing can suffer from difficulties in the case of non-parametrized geometries and when the \Ale{knowledge of the image} of the control points (usually the boundary) is not easily available. These difficulties can be tackled using the elasticity-based morphing methodology developed in \cite{kabalan2025elasticity}, which tends to produce smooth and invertible mappings even for complex, non-parameterized geometries. 
\end{rem}

\subsection{Preconditioning}\label{sec:inner product}

The choice of the inner product $\langle\cdot,\cdot\rangle_{\mathcal{H}}$ is a crucial design decision that has significant theoretical and practical implications. Recall that the gradient is defined via the Riesz representation with respect to this inner product. Different inner products yield different gradient vectors (even though they represent the same underlying differential), implying that the steepest-ascent direction depends on how  distances and angles are measured in the functional space. In essence, the inner product acts as a form of preconditioning for the gradient ascent procedure. There are two primary requirements for the inner product in the present morphing optimization. 
\begin{enumerate}
    \item Regularization: the inner product should be chosen so that the resulting gradient field $ \nabla J_r[\Phi]$ is sufficiently smooth and free of high-frequency oscillations or non-physical features. \Ale{For instance, with the naive choice} $\mathcal{H} := [L^2(\Omega_0)]^n$ with the $L^2$-inner product, the gradient becomes essentially the raw sensitivity field $(\f_i[\Phi])_{1\leq i\leq n}$ from \eqref{eq:J_diff}. \Ale{In our} numerical experiments, \Ale{we observed that} the fields $\f_i[\Phi]$ can be extremely irregular and concentrated in localized regions. This is \Ale{somehow expected} since $J_r$ depends on the \Ale{products between} the solution fields. A noisy or non-smooth gradient direction can cause the morphing update \eqref{gradient update} to distort the mesh or even violate invertibility. Therefore, it is critical to define an inner product that includes derivative terms (making it closer to an $H^1$ inner product). This has the effect of filtering out the oscillatory components of $\f_i$ and yielding smoother displacement fields $\bu^{(m)}_i$ at each iteration. 
    \item Conservation of mapping constraints: the inner product must be compatible with the geometric constraints related to our search space $\mathbf{M}$. \Ale{We require} that $\bphi_i^{(m)}(\partial\Omega_0) = \partial\Omega_i$, \Ale{which, in return, requires that} the update displacement field $\bu_i^{(m)}$ (the Riesz representative) be tangential to the boundary $\partial\Omega_i$. \Ale{Specifically}, if $\bn_{\bphi_i}(\boldsymbol{y})$ denotes the unit outward normal at a boundary point $\boldsymbol{y} \in \partial\Omega_i$, then we require $\bu_i(\boldsymbol{x})\cdot \bn_{\bphi_i}(\boldsymbol{y})=0$, \Ale{for all $\boldsymbol{x}\in\partial\Omega_0$ with $\boldsymbol{y}:=\bphi_i(\boldsymbol{x})$}. This tangential update condition ensures that $\bphi_i^{(m+1)}(\boldsymbol{x}) = \bphi_i^{(m)}(\boldsymbol{x}) + \epsilon ^{(m)}\bu_i^{(m)}(\boldsymbol{x})$ still maps $\boldsymbol{x}$ onto (approximately) $\partial\Omega_i$. 
\end{enumerate}

We work with each component of $\nabla J_r[\Phi^{(m)}]= (\bu_i^{(m)})_{1\leq i\leq n} \subset \mathcal{H}^{(m)}$ separately. We define a linear elasticity-based inner product. Concretely, let $\bu,\bv \in \boldsymbol{H}^1(\Omega_0)$ \Ale{be two displacement fields on $\Omega_0$}. We define
 $$
(\bu,\bv) \mapsto a(\bu,\bv) := \int_{ \Omega_0} \sigma ( \bu):\varepsilon (\bv) d\boldsymbol{x},
$$
where $\varepsilon(\bu)$ and $\sigma(\bu)$ are respectively the linearized strain and stress tensors defined by
\begin{align*}
    &\varepsilon(\bu) := \frac{1}{2} (\nabla \bu + \nabla \bu^T),\\
    &\sigma(\bu) :=  \frac{E}{(1+\nu)}\varepsilon(\bu) + \frac{E\nu}{(1+\nu)(1-\nu)} {\rm {\rm Tr}}(\varepsilon(\bu))I.
\end{align*}
Here, $E>0$ and $-1<\nu<\frac{1}{2}$ are, respectively, the Young modulus and the Poisson ratio.

To enforce the tangential boundary condition, we restrict the displacement space to $H^1_{\bn_{\bphi}}(\Omega_0) := \{\,\bu \in H^1(\Omega_0) : \bu \cdot \bn_{\bphi}\circ\bphi = 0 \}$, \Ale{so that} the admissible displacements $\bu$ are those that produce tangential displacements of the boundary \Ale{of $\Omega_0$}. This subspace $H^1_{\bn_{\bphi}}$ can be thought of as the tangent space to the manifold $\mathbf{M}$ at the current morphing $\bphi$. Variations in this tangent space will deform the interior of the domain while sliding along the boundary, thus \Ale{preserving  the target} boundary (up to higher order).

In practice, we impose the condition $\bu \cdot \bn_{\bphi} \circ \bphi = 0$ using a penalty method. We modify the bilinear form $a(\bu,\bv)$ by adding a penalty term on the boundary normal component:
\begin{align} \label{eq: inner product normal}
 \forall \bu,\bv \in \boldsymbol H^1(\Omega_0),\quad a(\bu,\bv) := \int_{ \Omega_0} \sigma ( \bu):\varepsilon (\bv) d\boldsymbol{x} + \alpha \int_{ \bphi(\partial\Omega_0)} (\bu \circ \bphi^{-1}){\cdot} \bn_{\bphi} (\bv\circ \bphi^{-1}){\cdot} \bn_{\bphi}  ds .
\end{align}
This bilinear form defines an inner product in $ H^1(\Omega_0)$ \cite{kabalan2025elasticity}. The scalar user-dependent parameter $\alpha \gg 1$ is a large weight (for example, we used $\alpha := 10^{12}$ in our \Ale{numerical} experiments). Altogether, we proceed as follows: For all $1\leq i \leq n$,
\begin{enumerate}
    \item Compute the \Ale{sensitivity term} $\f_i[\Phi^{(m)}]$.
    \item Compute the unique element $\bu_i^{(m)}\in H_{\bn^{(m)}}$, with $\bn^{(m)}:=\bn_{\bphi_i^{(m)}}$ , such that \begin{align}
        a(\bu_i^{(m)},\bv)= \langle \f_i[\Phi^{(m)}] , \bv \rangle_{\Omega_0} \quad \forall \bv \in H_{\bn^{(m)}}.
    \end{align} 
    \item Update the morphing using the gradient \Ale{ascent} scheme as $$
    \bphi_i^{(m+1)}=\bphi_i^{(m)}+ \epsilon \bu_i^{(m)}.
    $$
\end{enumerate}
\begin{rem}[Safeguard check]
    If the geometric error between $\partial\bphi_i^{(m)}(\Omega_0)$ and $\partial\Omega_i$ \Ale{is deemed too} large, one can perform an \Ale{additional} correction step by projecting $\partial\bphi^{(m)}_i(\Omega_0)$ onto  $\partial\Omega_i$. However, this happened very rarely in our numerical experiments.
\end{rem}

\subsection{Bijectivity}\label{sec: bijectivity}
A distinguishing challenge in the \Ale{above minimization} problem \eqref{optimization statement} is the requirement that \Ale{the morphings} $\bphi_i^{(m)}:\Omega_0 \to \Omega_i$ remain bijective at all iterations. This constraint is essential to preserve the physical correspondence between points in the reference and target domains and to guarantee that the pulled-back solution fields $u^{\rm ref}_i$ defined in \eqref{eq: u ref} remain meaningful. However, enforcing global bijectivity is highly nontrivial: the set of all diffeomorphisms is an open, non-convex set, and naive gradient updates such as \eqref{gradient update} can easily drive the updated morphing family $\Phi^{(m)}$ out of this set.

\subsubsection*{Penalized objective functional}

We incorporate a penalty strategy that introduces a barrier to non-bijective transformations. Instead of maximizing $J_r[\Phi]$ alone, we include a penalty functional $E[\bphi_i]$ on each morphing $\bphi_i$ which becomes large (ideally unbounded) as $\bphi_i$ looses invertibility. Specifically, we consider the augmented objective functional
\begin{align} \label{eq: I definition}
    I_{r}[\Phi]:= J_{r}[\Phi] - c_1 \sum_{i=1}^n E[\bphi_i],
\end{align}
and seek  \begin{align} \label{eq: optimization I}
    \Phi^* = \arg\max_{\Phi\in \mathbf{M}} I_r[\Phi].
\end{align} 
Here $c_1>0$ is a penalty parameter that tunes the trade-off between the original objective and the invertibility enforcement. Moreover, we choose $E$ such that it tends to $+\infty$ if $\bphi_i$ looses invertibility. In \Ale{this situation} $I_r[\Phi]$ tends to $-\infty$, making any non-invertible component of the morphing family $\Phi$ an undesirable \Ale{candidate}. Thus, by maximizing $I_r$ instead of $J_r$, we bias the search toward the subset of $\mathbf{M}$ consisting of well-behaved deformations.

\subsubsection*{Penalty functional design}
We propose two choices for $E[\bphi]$ inspired by continuum mechanics, which penalize distortion and singular Jacobian matrix.
\begin{enumerate}
    \item Linear elastic energy: we define $E[\bphi]$ as the elastic strain energy of the deformation $\bphi$ relative to the identity map. Assuming small strains, we \Ale{set}
\begin{align} \label{linear elastic energy}
     E_{\text{lin}}[\bphi] := \frac{1}{2}\int_{ \Omega_0} \sigma ( \bphi-\bId):\varepsilon (\bphi-\bId) d\boldsymbol{x},
\end{align}
  Here, $\bId$ is the identity mapping on $\Omega_0$, $\mathbf{u}:=\bphi - \bId$ is the displacement field, $\varepsilon(\mathbf{u}) := \tfrac{1}{2}(\nabla \mathbf{u} + \nabla \mathbf{u}^{\rm T})$ is the linearized-strain tensor, and $\sigma(\mathbf{u})$ is the corresponding stress tensor. This choice of $E_{\text{lin}}$ penalizes deviations from the identity map, with larger penalties for large strains or shear/distortion. It tends to keep $\bphi$ in the regime of mild, smooth deformations and prevents excessive element distortion that could lead to Jacobian sign change. However, $E_{\text{lin}}$ alone does not in general diverge fast enough as $\det(\nabla\bphi)\to 0$. 
  \item Nonlinear (Neo-Hookean) elastic energy: for a stronger guarantee against loss of injectivity, we use a hyperelastic energy functional. Specifically, we consider a compressible Neo-Hookean model given by

 \begin{align} \label{non linear elastic energy}
     E_{\rm NH}[\bphi] := \int_{ \Omega_0} \frac{\mu}{2} (\text{trace}(F[\Phi]^{\rm T}F[\Phi])-3)+\lambda(J[\Phi]^2-1)-\left|\frac{\lambda}{2}+\mu\right|\text{ln}J[\Phi] d\boldsymbol{x}.
\end{align}
  Here $F[\Phi] := \nabla \bphi$ is the deformation gradient, $J[\Phi] := \det(F[\Phi])$ is the Jacobian determinant of $\bphi$, and $\mu,\lambda>0$ are material parameters (Lamé constants). The energy functional $E_{\rm NH}$ grows rapidly if the morphing $\bphi$ undergoes large stretching or compression. Crucially, as $J \to 0$ (volume collapses, indicating a loss of injectivity), the term $- \left|\frac{\lambda}{2} + \mu\right|\ln J[\Phi] $ in $E_{\text{NH}}$ blows up to $+\infty$.  Thus, $E_{\text{NH}}$ provides an infinite-energy barrier to any morphing that approaches a fold-over or degenerate Jacobian. Neo-Hookean and related hyperelastic energies are standard in enforcing diffeomorphic transformations in topology optimization and image registration contexts, due to their ability to prevent element inversion.
\end{enumerate}
For both  choices, we use the same gradient \Ale{ascent} algorithm with the inner product defined in \eqref{eq: inner product normal}. We note that adding such penalties does not guarantee strict bijectivity for a given finite $c_1$, but by taking $c_1$ sufficiently large, the gradient \Ale{ascent algorithm} is expected to prefer configurations far from the non-invertible regime. The value of the parameter $c_1$ can be \Ale{progressively decreased} through the iterations as we now explain.

\subsubsection{Continuation for $c_1$}
\begin{enumerate}
    \item We solve the maximization problem \eqref{eq: optimization I} using the gradient-ascent algorithm with $c_1:=c_1^0$, where $c_1^0$ should be chosen sufficiently large in order to ensure bijectivity. This produces the morphing family $\Phi^0$.
    \item For all $k\geq 1$, we set $\displaystyle c_1^k:=\frac{1}{2}c_1^{k-1}$, and solve the maximization for $\Phi^k$ by initializing with $\Phi^{(0),k}:=\Phi^{k-1}$.
    \item We iterate over $k$ as long as there are no elements of the morphed meshes that are inverted.
\end{enumerate}
The idea is to decrease progressively the weighting coefficient $c_1$, since ideally we want to solve \eqref{eq: I definition} for $c_1=0$.
This leads to an outer iterative procedure instead by $k$ embedding the gradient ascent algorithm for the functional $I_r$ with varying coefficient $c_1$. We notice that full convergence for the intermediate morphings $\Phi^k$ is generally not necessary. We call the resulting two-loop procedure a continuation method on $c_1$.

\subsection{Computational cost and acceleration strategies}

Solving the optimization problem \eqref{eq: optimization I} requires \AK{four} \Ale{main tasks at each iteration:} (i) evaluating, \AK{for all $1 \leq i \leq n$}, $\f_i[\Phi]$ (thus $u_i$ and its gradient) at the integration points of the deformed reference mesh, (ii) computing the energy $E_{\rm lin}$ or $E_{\rm NH}$, (iii) assembling the stiffness matrix associated with the bilinear form $a$, (iv) solving the linear systems to compute the Riesz representative. Potential computational bottlenecks come from having a very fine reference mesh, which increases the degrees of freedom and the number of integration points. We use an acceleration strategy consisting of using a coarse reference mesh to compute the optimal morphings, and then evaluating the morphings on the fine mesh using interpolation. This greatly decreases optimization time while providing accurate results. To further accelerate the computations, when needed, other techniques can be used like reduced quadrature formulae or randomized linear algebra. However, this was not needed in the examples considered below.

\section{Preliminary examples}
To showcase the different components of the methodology introduced in the previous section, we present here some preliminary examples. First, we start \Ale{with} some \Ale{simplifying} results \Ale{in specific situations}.

\subsection{Polytopal domains and linear elasticity-based energy}
In the specific case where $\Omega_0=\Omega_i$ for all $1\leq i \leq n$, and $\Omega_0$ is a polytopal domain (polygon in 2D and polyhedron in 3D), the enforcement of the boundary condition on the morphing can be simplified. In this case, the outward normal direction $n_{\bphi}$ on each facet is constant, independent of the point on that facet and, thus, independent of the deformation as long as the facet in question always maps onto itself. Thus, the space $H^1_{\bn_{\bphi}}(\Omega_0)$ does not change with $\bphi$ during iterations, and the \Ale{bilinear form} product $a$ in \eqref{eq: inner product normal} can be assembled once at the start and reused for all iterations. To this end, we replace the bilinear form $a$ by
\begin{align} \label{eq: inner product normal fixed}
 \forall \bu,\bv \in \boldsymbol H^1(\Omega_0),\quad a(\bu,\bv) := \int_{ \Omega_0} \sigma ( \bu):\varepsilon (\bv) d\boldsymbol{x} + \alpha \int_{ \partial\Omega_0} \bu \cdot \bn \bv\cdot \bn  ds .
\end{align}
This avoids the need to update the stiffness matrix at each iteration, yielding a significant computational speedup in the offline phase. In other words, for polytopal domains, the \Ale{bilinear form} is fixed, whereas for general curved domains, the \Ale{bilinear form} is state-dependent, requiring re-computation and re-assembly at each step.
\Ale{Another simplification is achieved in the case of linear-elasticity based penalty in \eqref{linear elastic energy}}. In this situation, we can further simply the gradient \Ale{algorithm} by avoiding the computation of the differential of the energy term $E_{\rm lin}$. In fact, we can suppose that $E_{\text{lin}} [\bphi]= \frac{1}{2}a(\bphi-\bId,\bphi-\bId)$ in this case, \Ale{so that} we obtain 
\begin{align*}
    DI_{r}[\Phi][\Psi]&=DJ_r[\Phi][\Psi]-c_1\sum_{i=1}^n DE_{\text{lin}}[\bphi_i][\bpsi_i] \\
    &=\sum_{i=1}^n \langle \f_i[\Phi], \bpsi_i \rangle -c_1 \sum_{i=1}^n a(\bphi_i-\bId,\psi_i).
\end{align*}

\Ale{Therefore, for all $1\leq i \leq n$, the $i$-th component of the Riesz representative} of $DI_r[\Phi]$, denoted as $\Bar{\bu}_i[\Phi]$, \AK{is obtained from the $i$-th component of the Riesz representative of $DJ_r[\Phi]$, denoted as $\bu_i[\Phi]$, by a simple substraction, i.e.~we have }  
$$\Bar{\bu}_i[\Phi]=\bu_i[\Phi]-c_1(\bphi_i- \bId).$$ Hence the gradient scheme \eqref{gradient update} becomes
\begin{align}\label{gradient update I elasticity}
\bphi^{(m+1)}_i=\bphi^{(m)}_i + \epsilon^{(m)} \left(\bu_i[\Phi^{(m)}]- c_1 \left(\bphi_i^{(m)}-\bId\right)\right), \quad \forall \, \Ale{1\leq i \leq n.} 
\end{align}
We emphasize that the above equation is only valid for polytopal domains and linear elasticity-based penalty.


\subsection{Toy example}
We consider the domain $\Omega_0:=(-1,1)~\times~(-1.25,1.25)$, and define, for all $1\leq i \leq n:=30$, the functions $u_i$ on $\Omega_i := \Omega_0$ as 
\begin{align}
    u_i(x,y):= \exp\left(-\frac{(\beta_i(x+1)-y)^2}{0.05}\right),
\end{align}
with $\beta_i $ uniformly sampled in $[-0.38,0.38]$. In Figure \ref{fig: 3 samples}, we show $u_i$ for three values of $\beta_i$. We use the gradient algorithm to solve the \Ale{gradient ascent} problem with $\epsilon := 2.5$ . We fix the Young modulus $E:=1$, and the Poisson ratio $\nu:=0.3$ for the elasticity biliear form $a$, and we choose \Ale{just} $r:=1$ mode.
\begin{figure}[h] 
     \centering
     \begin{subfigure}[b]{0.3\textwidth}
         \centering
        \includegraphics[scale=0.2]{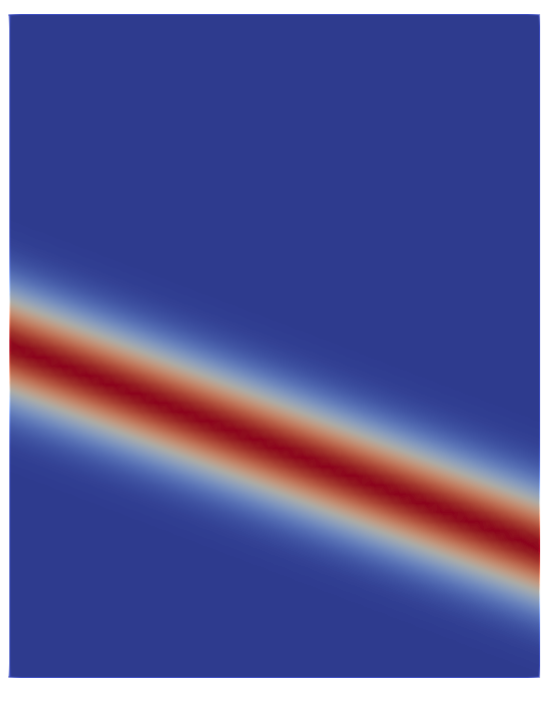}
     \caption{$\beta=-0.38$}
     \end{subfigure}
     \hfill
     \begin{subfigure}[b]{0.3\textwidth}
         \centering
        \includegraphics[scale=0.2]{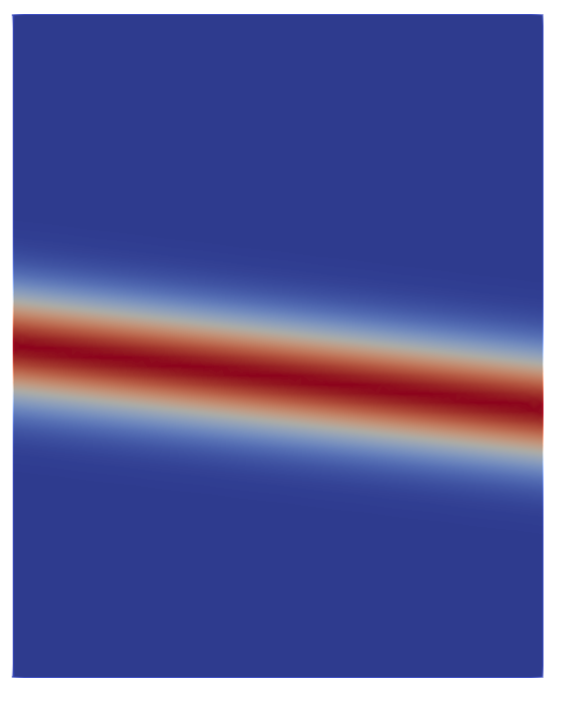}
     \caption{ $\beta=-0.126$}
     \end{subfigure}
          \hfill
     \begin{subfigure}[b]{0.3\textwidth}
         \centering
        \includegraphics[scale=0.32]{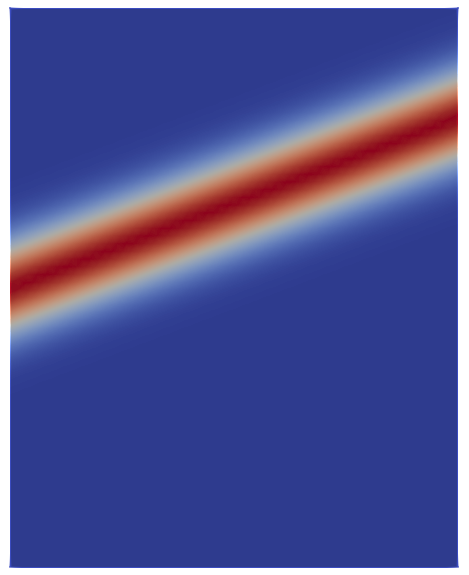}
     \caption{ $\beta=0.38$ }
     \end{subfigure}
\caption{\label{fig: 3 samples} $u_i$ for three different values of $\beta$.}
\end{figure}
\subsubsection{Linear elasticity-based energy}
First, we run the algorithm with $c_1:=5\times 10^{-3}$ using the gradient ascent method with \Ale{the} linear elasticity-based penalty energy $E_{\text{lin}}$ \Ale{defined in \eqref{linear elastic energy}}. We also test the effect of performing numerical continuation on $c_1$, as discussed \Ale{above}. In this case, we set $c_1^0:=1$, and $c_1^k$ is modified when $\displaystyle \left| \frac{J[\Phi^{(m),k}]-J[\Phi^{(m-1),k}]}{J[\Phi^{(m),k}]} \right| < 10^{-4}$. In \Ale{our numerical results}, $c_1^k$ converges to zero and reaches values below $10^{-8}$ in about 200 iterations. In Figures \ref{fig: 3 samples morphed u} and \ref{fig: 3 samples morphed u continuation}, we show $u_i\circ \bphi_i$ for the same three different values of $\beta$ as in Figure \ref{fig: 3 samples}, with and without continuation \Ale{on $c_1$}. For the case with continuation, the samples are perfectly aligned and can be reduced to one mode. This is not the case when fixing $c_1$ \Ale{to} $5\times 10^{-3}$. This is due to the fact that some of the 30 computed morphings are actually not bijective. In Figures \ref{fig: 3 morphings} and \ref{fig: 3 morphings continuation}, we show the morphed meshes for the three values of $\beta$. We can clearly see the flipped elements in Figure \ref{fig: 3 morphings}.

\begin{figure}[h] 
     \centering
     \begin{subfigure}[b]{0.3\textwidth}
         \centering
        \includegraphics[scale=0.3]{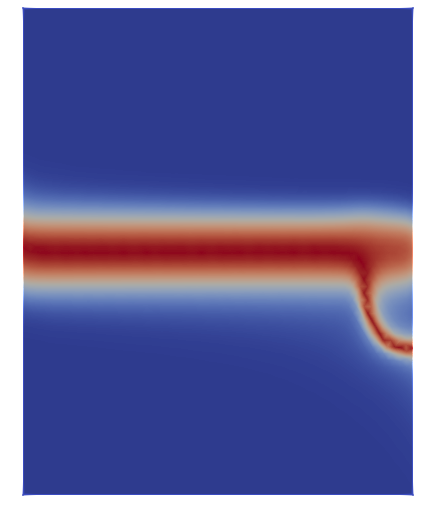}
     \caption{$\beta=-0.38$}
     \end{subfigure}
     \hfill
     \begin{subfigure}[b]{0.3\textwidth}
         \centering
        \includegraphics[scale=0.3]{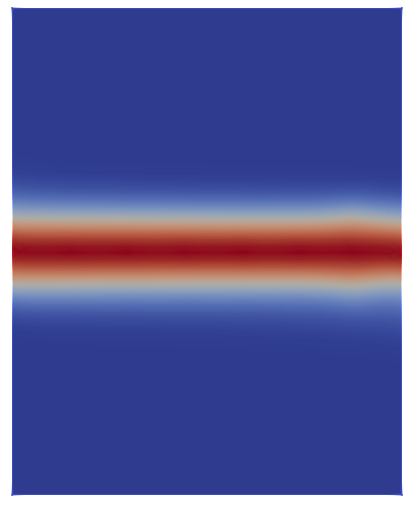}
     \caption{ $\beta=-0.126$}
     \end{subfigure}
          \hfill
     \begin{subfigure}[b]{0.3\textwidth}
         \centering
        \includegraphics[scale=0.3]{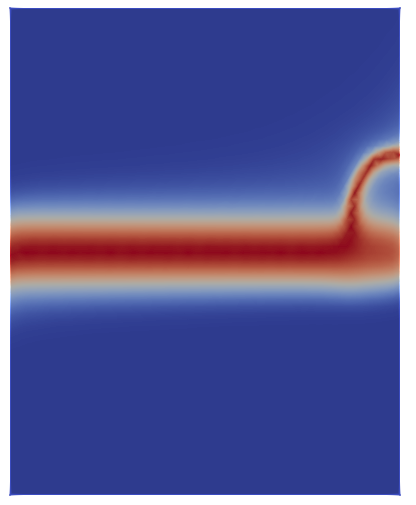}
     \caption{ $\beta=0.38$ }
     \end{subfigure}
\caption{\label{fig: 3 samples morphed u} $u_i \circ \bphi_i$ for three different values of $\beta$ without using the continuation on $ c_1$.}
\end{figure}

\begin{figure}[h!] 
     \centering
     \begin{subfigure}[b]{0.3\textwidth}
         \centering
        \includegraphics[scale=0.29]{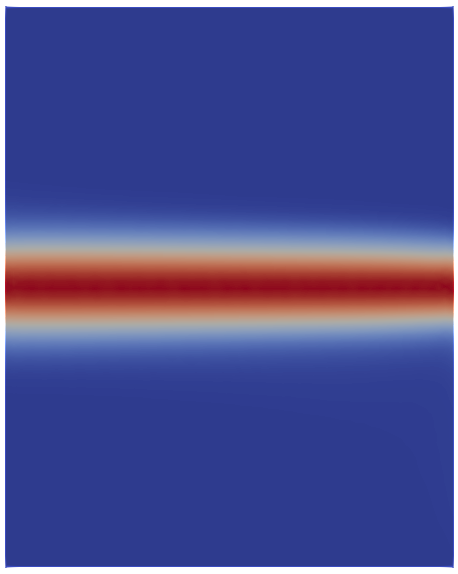}
     \caption{$\beta=-0.38$}
     \end{subfigure}
     \hfill
     \begin{subfigure}[b]{0.3\textwidth}
         \centering
        \includegraphics[scale=0.29]{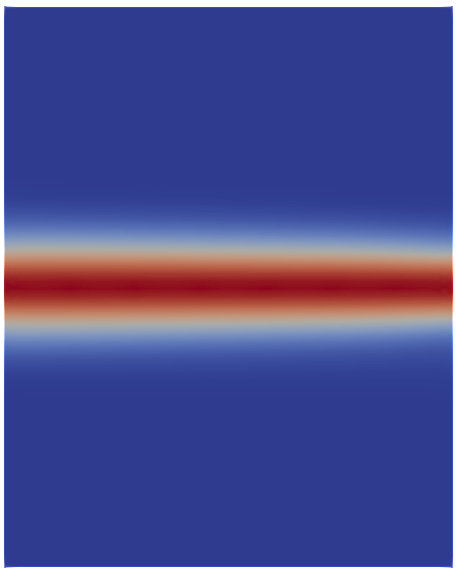}
     \caption{ $\beta=-0.126$}
     \end{subfigure}
          \hfill
     \begin{subfigure}[b]{0.3\textwidth}
         \centering
        \includegraphics[scale=0.29]{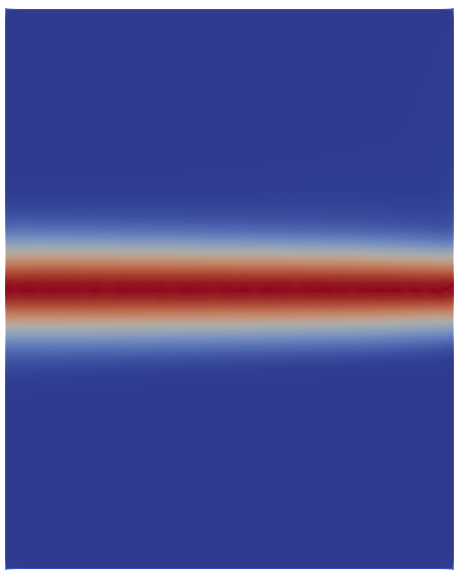}
     \caption{ $\beta=0.38$ }
     \end{subfigure}
\caption{\label{fig: 3 samples morphed u continuation} $u_i\circ \bphi_i$ for three different values of $\beta$, using continuation \Ale{on $c_1$}.}
\end{figure}

\begin{figure}[h!] 
     \centering
     \begin{subfigure}[b]{0.3\textwidth}
         \centering
        \includegraphics[scale=0.345]{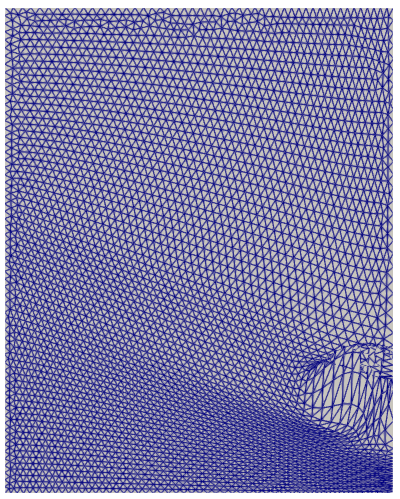}
     \caption{$\beta=-0.38$}
     \end{subfigure}
     \hfill
     \begin{subfigure}[b]{0.3\textwidth}
         \centering
        \includegraphics[scale=0.345]{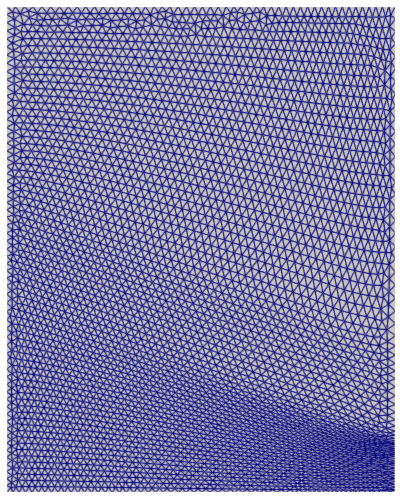}
     \caption{ $\beta=- 0.354$}
     \end{subfigure}
          \hfill
     \begin{subfigure}[b]{0.3\textwidth}
         \centering
        \includegraphics[scale=0.3]{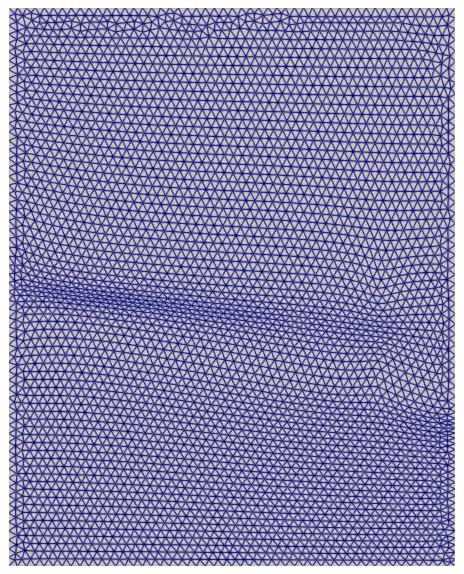}
     \caption{ $\beta= -0.126$ }
     \end{subfigure}
\caption{\label{fig: 3 morphings} $\bphi_i$ for three different values of $\beta$ without using continuation on $c_1$.}
\end{figure}

\begin{figure}[h!] 
     \centering
     \begin{subfigure}[b]{0.3\textwidth}
         \centering
        \includegraphics[scale=0.3]{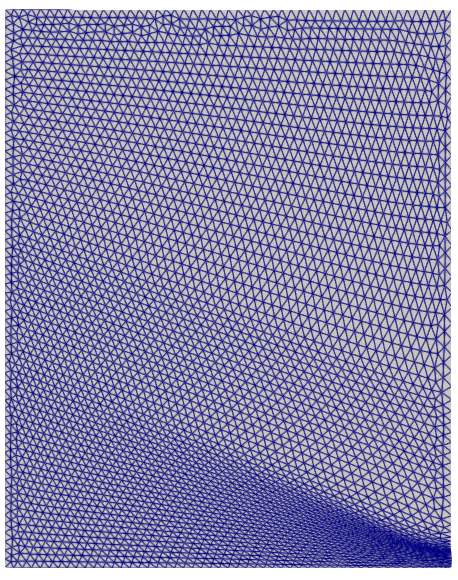}
     \caption{$\beta=-0.38$}
     \end{subfigure}
     \hfill
     \begin{subfigure}[b]{0.3\textwidth}
         \centering
        \includegraphics[scale=0.3]{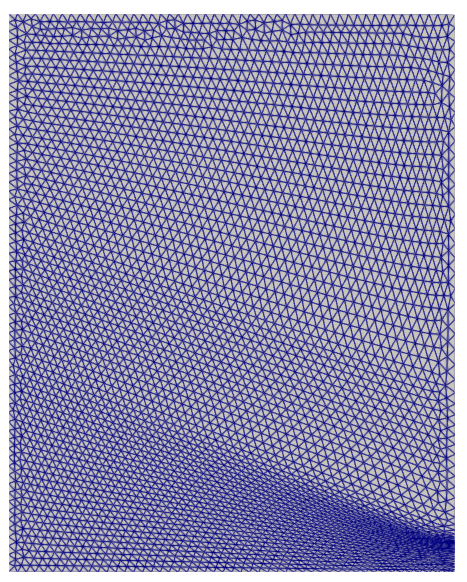}
     \caption{ $\beta=- 0.354$}
     \end{subfigure}
          \hfill
     \begin{subfigure}[b]{0.3\textwidth}
         \centering
        \includegraphics[scale=0.3]{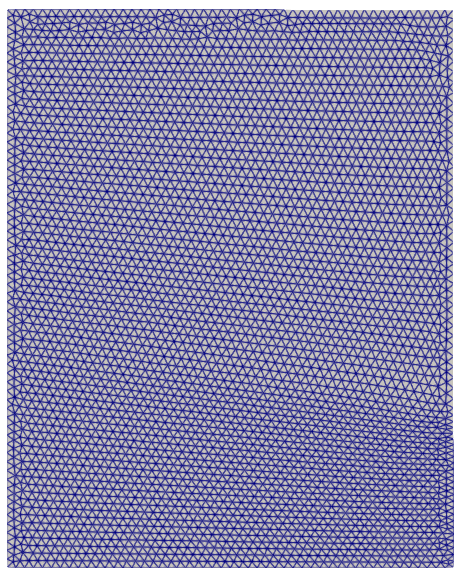}
     \caption{ $\beta= -0.126$ }
     \end{subfigure}
\caption{\label{fig: 3 morphings continuation} $\bphi_i$ for three different values of $\beta$ computed using the continuation method.}
\end{figure}

Next, we plot in Figure \ref{fig: J_phi_comparaison} the evolution of $1-J_r[\Phi]$ during the optimization process for both cases. At the begining of the algorithm, the value of $1-J_r[\Phi]$ decreases more rapidly without continuation than with \Ale{continuation}. The sharp changes in the curve with continuation are due to changes in $c_1^k$. Using continuation, the algorithm reaches $10^{-4}$ in about 400 iterations, an order of magnitude smaller than the \Ale{value reached without continuation in the same number of iterations}.

\begin{figure}[ht] 
\begin{center}
\includegraphics[scale=0.5]{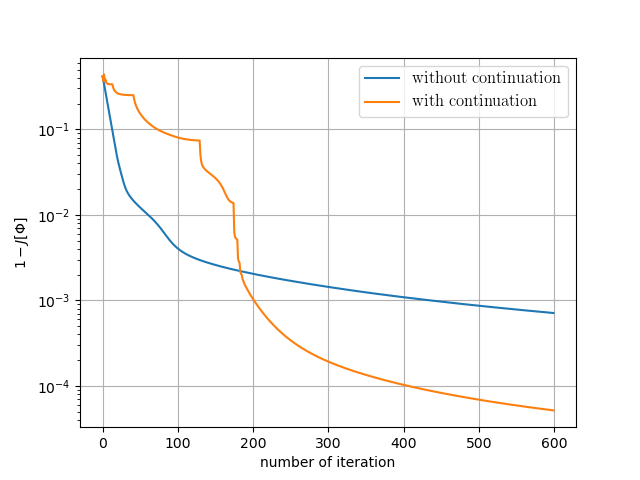}
\caption{ \label{fig: J_phi_comparaison} Evolution of $1-J_r[\Phi]$ (in logarithmic scale) with respect to the number of iterations.}
\end{center}
\end{figure}

\subsubsection{Non linear elasticity-based penalty}
We now test the non linear elasticity-based penalty energy $E_{\text{NH}}$ defined in \eqref{non linear elastic energy}. We fix $\mu:=1$, $\lambda:=0.1$ and $c_1:= 5 \times 10^{-3}$ as for the linear case above. In this setting, adding the nonlinear energy term keeps all the morphings bijective. The functional $J_r[\Phi]$ reaches the value $0.995$.  In Figure \ref{fig: 3 samples morphed u NH}, we plot the morphed fields $u_i\circ \bphi_i$ for different values of the parameter $\beta$. We can notice some slight variations in contrast to the previous case. This is expected since we keep here the penalty term.

\begin{figure}[ht] 
     \centering
     \begin{subfigure}[b]{0.3\textwidth}
         \centering
        \includegraphics[scale=0.35]{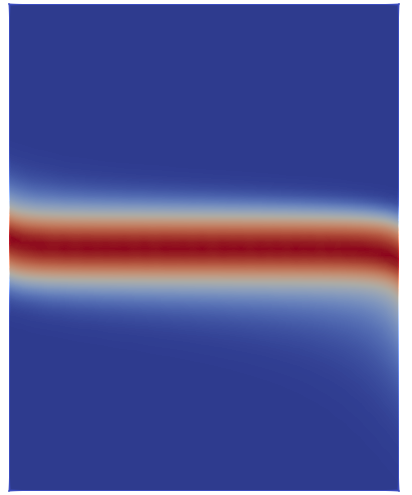}
     \caption{$\beta=-0.38$}
     \end{subfigure}
     \hfill
     \begin{subfigure}[b]{0.3\textwidth}
         \centering
        \includegraphics[scale=0.35]{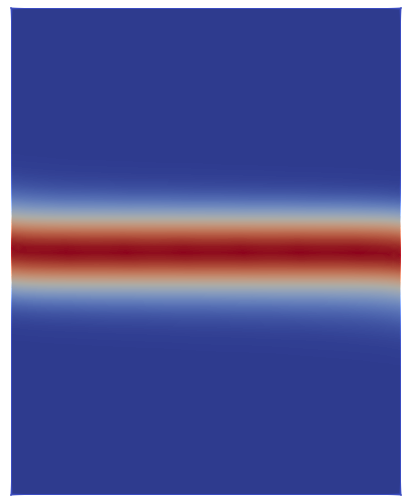}
     \caption{ $\beta=-0.126$}
     \end{subfigure}
          \hfill
     \begin{subfigure}[b]{0.3\textwidth}
         \centering
        \includegraphics[scale=0.35]{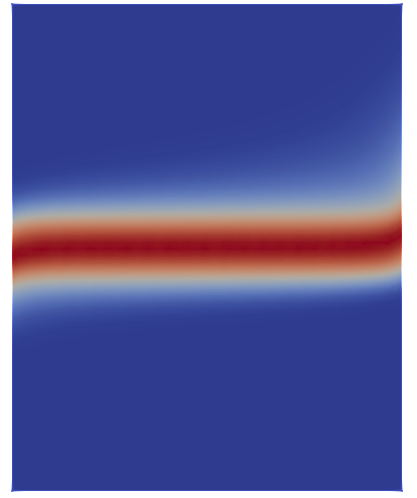}
     \caption{ $\beta=0.38$ }
     \end{subfigure}
\caption{\label{fig: 3 samples morphed u NH} $u_i\circ \bphi_i$ for three different values of $\beta$, using the non linear Neo-Hookean elasticity-based penalty.}
\end{figure}

\subsubsection{Varying $r$}
Finally, we test the algorithm by setting $r:=2$. In this case, we maximize the energy in the first two modes instead of one mode as in the previous setting. In Figure \ref{fig: 4 morphings 2modes}, we plot the morphed fields for \Ale{four} values of the parameter $\beta$. We can see that the \Ale{fields} align on two configurations automatically, without any \textit{a priori} knowledge of these two configurations. In Figure  \ref{fig: POD modes}, we plot the first two POD modes of $\{u_i\circ\bphi_i\}_{1\leq i \leq 30}$.

\begin{figure}[H] 
     \centering
     \begin{subfigure}[b]{0.24\textwidth}
         \centering
        \includegraphics[scale=0.27]{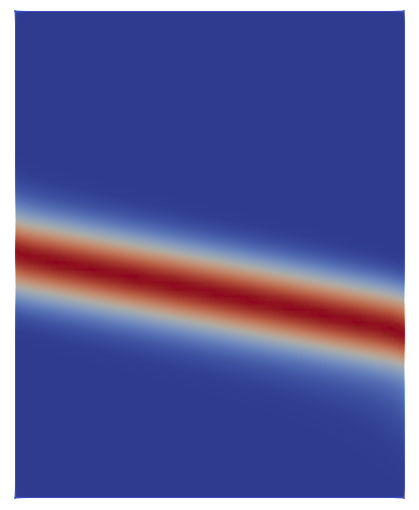}
     \caption{$\beta=-0.38$}
     \end{subfigure}
     \hfill 
     \begin{subfigure}[b]{0.24\textwidth}
         \centering
        \includegraphics[scale=0.27]{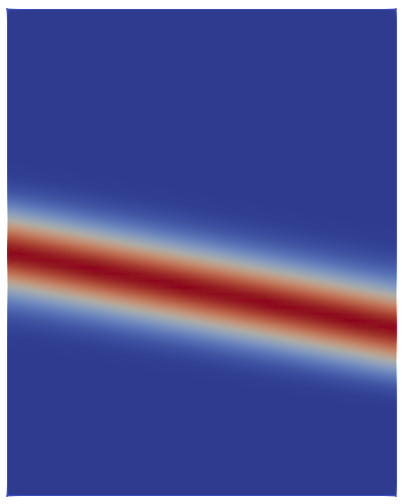}
     \caption{ $\beta=- 0.126$}
     \end{subfigure}
          \hfill
     \begin{subfigure}[b]{0.24\textwidth}
         \centering
        \includegraphics[scale=0.27]{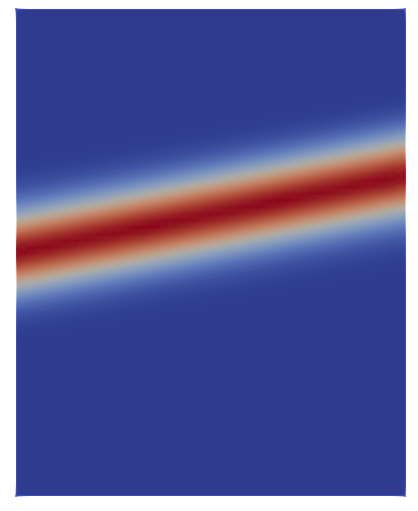}
     \caption{ $\beta= 0.126$ }
     \end{subfigure}
              \hfill
     \begin{subfigure}[b]{0.24\textwidth}
         \centering
        \includegraphics[scale=0.27]{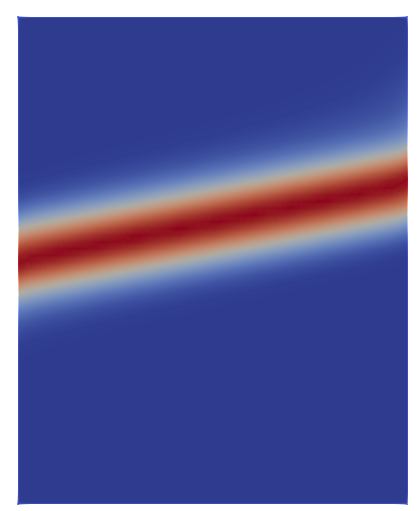}
     \caption{ $\beta= 0.38$ }
     \end{subfigure}

\caption{\label{fig: 4 morphings 2modes} $u_i\circ\bphi_i$ for four values of $\beta_i$, all computed for $r=2$.}
\end{figure}

\begin{figure}[ht] 
     \centering
         \centering
        \includegraphics[scale=0.4]{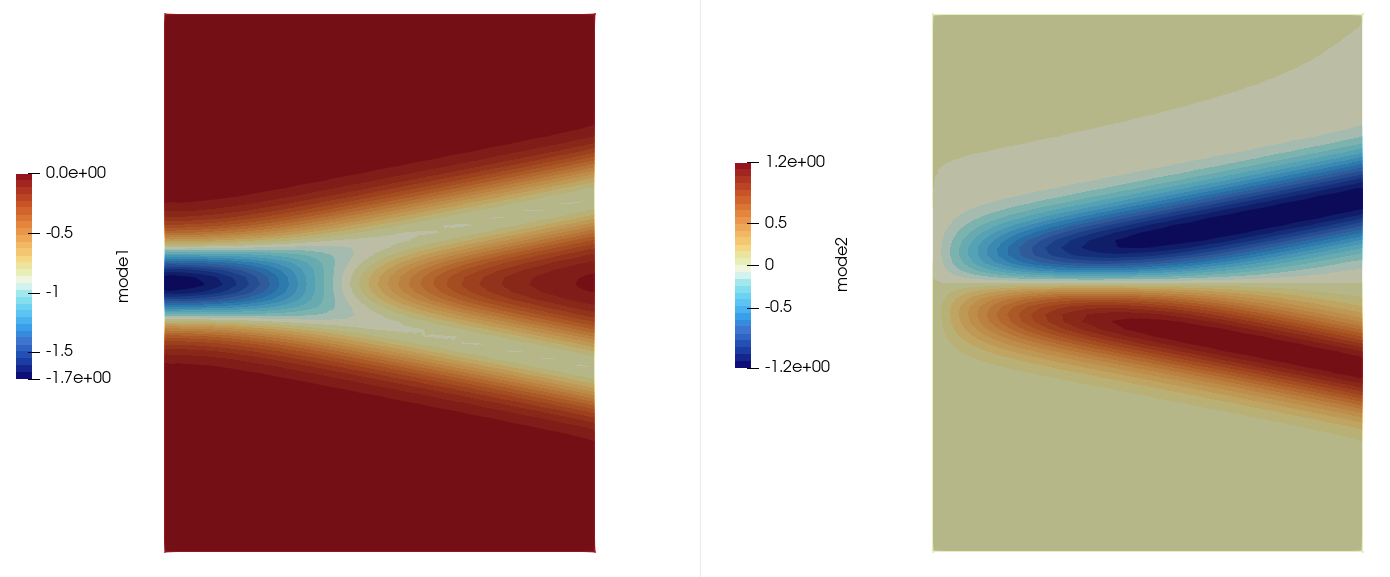}
\caption{\label{fig: POD modes} First two POD modes of the family $\{u_i\circ \bphi_i\}_{1 \leq i \leq n}$ for $r=2$.}
\end{figure}

\subsection{Advection-reaction equation}
This example is taken from \cite{taddei2020registration}. Here, we show the \Ale{benefits using} the optimal morphings on \Ale{the following} advection-reaction problem:
\[
\begin{cases}
\nabla \cdot (c_\mu u_\mu) + \sigma_\mu u_\mu = f_\mu & \text{in } \Omega := (0, 1)^2, \\
u_\mu = u_{D,\mu} & \text{on } \Gamma_{\text{in},\mu} := \{ \boldsymbol{x} \in \partial \Omega : \boldsymbol c_\mu \cdot \bn < 0 \},
\end{cases}
\]
where \(\bn\) denotes the outward \Ale{unit} normal to \(\partial \Omega\), and 
\[
\boldsymbol c_\mu =
\begin{bmatrix}
\cos(\mu_1) \\
\sin(\mu_1)
\end{bmatrix}, \quad
\boldsymbol  \sigma_\mu = 1 + \mu_2 e^{x_1 + x_2}, \quad
\boldsymbol f_\mu = 1 + x_1 x_2,
\]
\[
u_{D,\mu} = 4 \arctan \left( \mu_3 \left( x_2 - \frac{1}{2} \right) \right) (x_2 - x_2^2),
\]
\[
\mu := (\mu_1, \mu_2, \mu_3) \in \mathcal{P} := \left[ -\frac{\pi}{10}, \frac{\pi}{10} \right] \times [0.3, 0.7] \times [60, 100].
\]

We consider $n=250$ samples. In Figure \ref{advection_reaction}, we plot $u_{\mu}$ for three values of the parameter $\mu$ before and after computing the optimal morphings that maximize $J_{r}$ for $r:=1$. 
In this example, the optimal morphings align all the shocks approximately \Ale{at} the same position. 
\begin{figure}[h!] 
     \centering
     \includegraphics[scale=0.55]{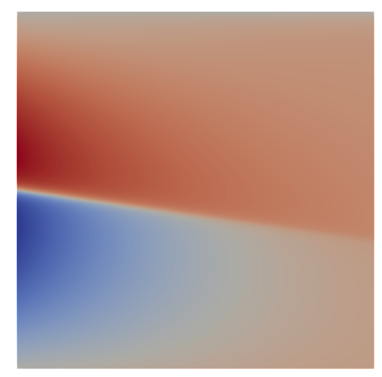}\quad
     \includegraphics[scale=0.55]{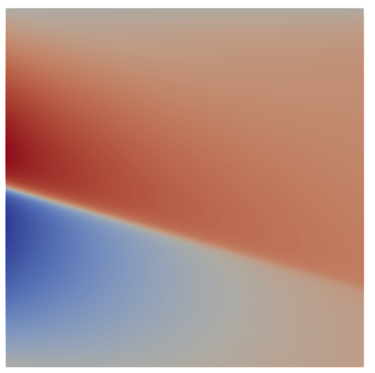}\quad
     \includegraphics[scale=0.55]{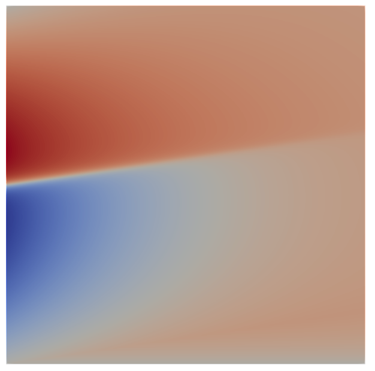}\quad

\vspace{1em}
\hspace{1em}
     \centering
     \includegraphics[scale=0.55]{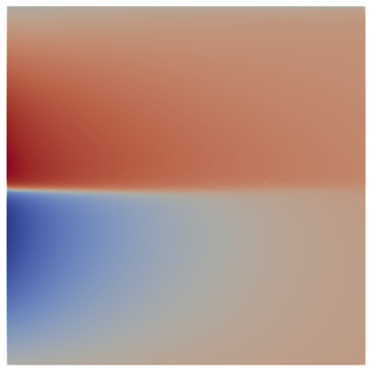}\quad
     \includegraphics[scale=0.55]{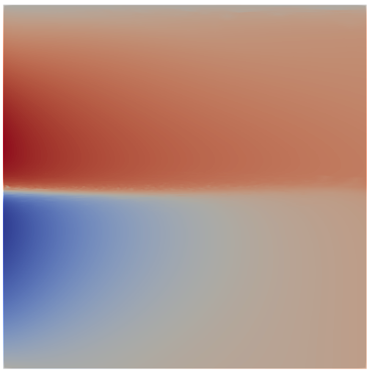}\quad
     \includegraphics[scale=0.55]{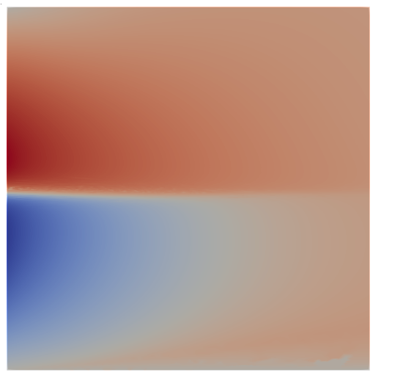}\quad

\caption{ Top: three samples before optimization. Bottom: three samples after \Ale{morphing} optimization.}
\label{advection_reaction}
\end{figure}
The optimal morphing algorithm can be seen as a multi-modal generalization to the registration methods, where the case $r=1$ gives similar results to aligning all the samples on one mode. However, the advantage of our method is that we can go beyond a single mode. Furthermore, the alignment is automatic and does not \Ale{requires} any feature detection \Ale{or} tracking methods.

\FloatBarrier
\section{ O-MMGP: Optimal Mesh Morphing Gaussian Process Regression}
Inspired by MMGP \cite{casenave2024mmgp}, we now present \Ale{a novel regression procedure,} O-MMGP (optimal mesh morphing Gaussian process) regression, which couples the above optimal morphings with Gaussian processes to devise non-intrusive reduced-order models. The main idea is to learn both the morphed fields $(u_{\mu}\circ\bphi_{\mu})_{\mu\in \mathcal{P}}$ and the optimal morphings $(\bphi_{\mu})_{\mu\in \mathcal{P}}$ instead of learning only the poorly reducible fields $(u_{\mu})_{\mu\in \mathcal{P}}$ on itself.

\subsection{Training phase}
In the training phase, we suppose that we have access to the dataset of triplets $(\Omega_i,\mu_i,u_i)_{1\leq i\leq n}$. The domain $\Omega_i$ (or its mesh) and the parameter $\mu_i$ are the inputs to the physical solver, and the field $u_i$ is its output. We assume that all the domains share the same topology. We choose a reference domain $\Omega_0$ which can be one from the dataset. We decompose each morphing $\bphi_i$ into $\bphi_i := \bphi_i^{\rm geo}\circ \bphi_i^{\rm opt}$, where $\bphi_i^{\rm geo}\in \mathbf{M_i}$ (i.e.~$\bphi_i^{\rm geo}(\Omega_0)=\Omega_i$) is the geometric morphing, and $\Phi^{\rm opt}:=(\bphi_i^{\rm opt}:\Omega_0\to \Omega_0)$ solves the problem \eqref{eq: optimization I} that maximizes the compression of the family of functions $(u_i^{\rm ref})_{1 \leq u \leq n} := \{u_i \circ \bphi^{\rm geo}_i\}_{1\leq i\leq n}$. We notice that $\bphi_i^{\rm opt}\in\mathbf{M}_0$, that is, $\bphi_i^{\rm opt}(\Omega_0)=\Omega_0$ for all $1 \leq i \leq n$.

\subsubsection{Preprocessing}
We perform the following preprocessing steps in the training phase. 
\begin{enumerate}

    \item We compute, for all $1\leq i\leq n$, a \Ale{geometric} morphing $\bphi^{\rm geo}_i \in \mathbf{M}_i$. We apply POD to the family $\{\bphi^{\rm geo}_i  -\bId\}_{1\leq i\leq n}$ with a parameter $n_{\rm geo}$ to obtain the POD modes $\{\bzeta^{\rm geo}_i\}_{1\leq i \leq n_{\rm geo}}$  and the generalized coordinates $\{\alpha^i\}_{1\leq i \leq n}$ where $\alpha^i = \left( \alpha^i_j \right)_{1\leq j \leq n_{\rm geo}}\in \mathbb{R}^{n_{\rm geo}}$, such that 
    \begin{align}
  \alpha_j^i= \langle \bphi^{\rm geo}_i -\bId , \bzeta^{\rm geo}_j \rangle_{\boldsymbol{L}^2(\Omega_0)}, \quad \forall 1\leq j \leq n_{\rm geo}, \quad \forall 1 \leq i \leq n .        
    \end{align}

Here, $n_{\rm geo}$ is the number of modes for the geometric morphings. Each domain $\Omega_i$ is defined now by the vector $\alpha^i \in \R^{n_{\rm geo}}$.

\item We solve problem  \eqref{eq: I definition}-\eqref{eq: optimization I} to obtain the functions $\{\bphi^{\rm opt}_i\}_{1\leq i\leq n}$ \Ale{that maximizes compressibility using $r$ modes of the family $\{u\circ\bphi_i^{\rm geo }\circ\bphi_i^{\rm opt }\}_{1\leq i \leq n}$ }. Then, we apply POD to the family $\{\bphi^{\rm opt}_i -\bId\}_{1\leq i\leq n}$ with a parameter $n_{\rm opt}$ to obtain the POD modes $\{\bzeta^{\rm opt}_i\}_{1\leq i \leq n_{\rm opt}}$  and the generalized coordinates $\{\beta^i\}_{1\leq i \leq n}$ where $\beta^i = \left( \beta^i_j \right)_{1\leq j \leq n_{\rm opt}}\in \mathbb{R}^{n_{\rm opt}}$, such that 
\begin{align}
    \beta^i_j= \langle \bphi^{\rm opt}_i  -\bId , \bzeta^{\rm opt}_j \rangle_{\boldsymbol{L}^2(\Omega_0)}, \quad \forall 1\leq j \leq n_{\rm opt}, \quad \forall 1 \leq i \leq n.
\end{align}
  Here, $n_{\rm opt}$ is the number of retained modes for the optimal morphings. 
\item Finally, after evaluating $\{u_i^{\rm opt}\}_{1\leq i\leq n} := \{u_i \circ \bphi^{\rm geo}_i\circ \bphi^{\rm opt}_i\}_{1\leq i\leq n}$, we apply POD with a parameter $r$ to obtain the POD modes $\{\psi_i\}_{1\leq i \leq r}$  and the generalized coordinates $\{\gamma^i\}_{1\leq i \leq n}$ where $\gamma^i = \left( \gamma^i_j \right)_{1\leq j \leq r}\in \mathbb{R}^{r}$, such that $$
\gamma^i_j= \langle \psi_i, u_j^{\rm opt} \rangle_{\boldsymbol{L}^2(\Omega_0)}, \quad \forall 1\leq j \leq r.
$$
\end{enumerate}
By performing the above steps, we obtain the following three approximations for all $1 \leq i \leq n$:
$$ 
\bphi^{\rm ref}_i \approx \,\bId + \sum_{j=1}^{n_{\rm geo}} \alpha^i_j  \bzeta^{\rm ref}_j, \quad \bphi^{\rm opt}_i \approx \,\bId + \sum_{j=1}^{n_{\rm opt}} \beta^i_j  \bzeta^{\rm opt}_j, \quad u_i^{\rm opt} \approx \, \sum_{j=1}^r \gamma^i_j  \psi_j.
$$


\subsubsection{GPR training}

After performing the \Ale{above} dimensionality reduction step, we train two \Ale{GPR} models \cite{rasmussen2006gaussian} as follows. 

\begin{enumerate}
    \item The first model \Ale{aims at} learning the optimal morphing that transforms the reference configuration to the optimal configuration. This model takes as input the physical parameter $\mu_i \in \R^{p}$ and the \Ale{generalized coordinates} $\alpha_i\in \R^{n_{\rm geo}}$ \Ale{of the geometric morphings $\bphi^{\rm geo}$}, and as output the \Ale{generalized coordinates} $\beta_i \in \R^{n_{\rm opt}}$ \Ale{of the optimal morphings $\bphi^{\rm opt}$}. We denote this model by $\mathcal R: \mathbb{R}^p \times \R^{n_{\rm geo}} \to \R^{n_{\rm opt}}$.
    \item The second model \Ale{aims at} learning the field in the optimal configuration. This model takes as input the physical parameter $\mu_i$ and the generalized coordinates $\alpha_i$ \Ale{of the geometric morphings $\bphi^{\rm geo}$}, and as output \Ale{the generalized coordinates} $\gamma_i$ \Ale{of the fields $u_i^{\rm opt}$}. We denote this model by $\mathcal O: \mathbb{R}^p \times \mathbb{R}^{n_{\rm geo}} \to \mathbb{R}^r$.
\end{enumerate}

\subsection{Inference phase}
In the inference phase, we are given a new unseen geometry $\widetilde{\Omega}$, with a physical parameter $\widetilde{\mu}$. The goal is to predict the field $\widetilde{u}$, solution to the physical simulation. We proceed in the following manner.
\begin{enumerate}
    \item First, we compute the geometric morphing $\widetilde{\bphi}^{\rm geo}$ that maps $\Omega_0$ onto $\widetilde{\Omega}$, then we project $\widetilde{\bphi}^{\rm geo}-\bId$ onto the POD basis $\{\bzeta^{\rm geo}_i\}_{1\leq i \leq {n_{\rm geo}}}$ to obtain its \Ale{generalized coordinates} $\widetilde{\alpha}\in \R^{n_{\rm geo}}$.
    \item We use the pair $(\widetilde{\mu},\widetilde{\alpha})$ to infer $\widetilde{\beta}= \mathcal R ( \widetilde{\mu},\widetilde{\alpha}) \in \R^ {n_{\rm opt}}$ and $\widetilde{\gamma}= \mathcal O (\widetilde{\mu},\widetilde{\alpha}) \in \R^{r}$. Then we have 
    \begin{align}
        \displaystyle \widetilde{\bphi}^{\rm opt} := \, \bId + \sum_{j=1}^{n_{\rm opt}} \widetilde{\beta}_j  \bzeta^{\rm opt}_j , \quad \widetilde{u}^{\rm opt} := \, \sum_{j=1}^r \widetilde{\gamma}_j  \psi_j.
    \end{align}    
    \item Finally, the \Ale{field} $\widetilde{u}$ is \Ale{predicted} as $\displaystyle \widetilde{u}:= \widetilde{u}^{\rm opt} \circ \left(\widetilde{\bphi}^{\rm opt}\right)^{-1} \circ \left(\widetilde{\bphi}^{\rm geo}\right)^{-1}.$
\end{enumerate}

\subsection{Multi-Scale Optimization Approach}

We leverage a multi-scale approach to avoid poor local optima. The intuition is that large-scale, coarse features of the fields should be aligned first, before worrying about small details. If we attempt to optimize the morphings $(\bphi_i^{\rm opt})_{1\leq i \leq n}$ using the full-resolution fields  $(u_i)_{1\leq i \leq n}$ directly, the optimization procedure might get stuck while aligning minor features at the expense of major ones. \Ale{To avoid this difficulty}, we construct a smoothed version of the fields, denoted $\widehat{u}_i(c_2)$ (or simply $\widehat{u}_i$ to avoid excessive notation), controlled by a parameter $c_2 > 0$. We define $\widehat{u}_i$ as the solution of a reaction–diffusion equation on $\Omega_0$ with $u_i^{\rm ref}:=u_i\circ\bphi_i^{\rm geo}$ as the source term:

\begin{align}
     -\Delta \widehat{u}_i + c_2\,\widehat{u}_i =c_2 \,u_i^{\rm ref}, \qquad \partial_n \widehat{u}_i|_{\partial \Omega_0} = 0. 
\end{align}
 This equation acts like a low-pass filter: when $c_2$ is small, $-\Delta$ dominates, and $\widehat{u}_i$ is \Ale{smoother (or diffused version) of $u_i^{\rm ref}$.} Instead, when $c_2$ is large, the solution becomes $\widehat{u}_i \approx u^{\rm ref}_i$ (no smoothing). This process being equivalent to convolving the field with a Gaussian kernel (with variance related to $c_2^{-1}$), we refer to it as a convolution filter on the data.
 
We then define a corresponding objective \Ale{functional} $J_r^{(c_2)}[\Phi]$ defined as in \eqref{J definition} but using the smoothed fields $( \widehat{u}_i)_{1\leq i \leq n}$ . For large $c_2$ (minimal smoothing), $J_r^{(c_2)}$ is essentially the original objective functional; for small $c_2$ (heavy smoothing), $J_r^{(c_2)}$ prioritizes alignment of \Ale{large-scale} features.

In practice, we solve a sequence of optimization problems, starting from a heavily smoothed case and progressively reducing the smoothing. At each stage $k$, we find a maximizer, then increase the value of $c_2$, and initialize $J_r^{(c_2)}$ from the previous stage. This \Ale{so-called} continuation method on $c_2$ allows the morphings to continuously deform, following the evolution of an optimum from a coarse alignment toward a fine alignment. 
\begin{rem}[Combining continuation on $c_1$ and $c_2$]
There are many possibilities to combine continuation on $c_1$ and on $c_2$.  A simple strategy is to alternate changing the values of the two parameters. For instance, we can proceed as follows:
\begin{enumerate}
    \item At iteration $2k$, set $c_1^{2k}:=\frac{1}{2}c_1^{2k-2}$.
    \item We start with a small value for $c_2^{0}$. At iteration $(2k+1)$, set $c_2^{2k+1}:=10 \times c_2^{2k-1}$.  
\end{enumerate}
 In our numerical examples, we find that the methodology is not sensitive to the initial choices for both parameters (as expected). In fact, choosing a large value for $c_1$ forces the morphing to be close to the identity map, and and choosing a small value for $c_2$ has a similar effect in terms of convergence.

\end{rem}

\section{Numerical results}
\subsection{Neurips 2024 ML4CFD competion}
In the first example, we apply the \Ale{proposed O-MMGP approach} to the airfoil design case considered in the ML4CFD NeurIPS 2024 competition \cite{yagoubi2024neurips}, and we compare it with the winning solution based on MMGP presented in \cite{yagoubi2025neurips}. We recall that, for each sample in this dataset, we have two scalar inputs, the inlet velocity and the angle of attack, and three output fields, the velocity, pressure and the turbulent kinematic viscosity. The dataset is split into three subsets.
\begin{enumerate}
    \item Training set composed of 103 samples.
    \item Test set composed of 200 samples.
    \item Out-of-distribution (OOD) testing set composed of 496 samples, where the Reynolds number considered for samples in this \Ale{subset} is taken out of distribution.
\end{enumerate}
\Ale{In what follows}, we focus on the turbulent kinematic viscosity field as it represents a poorly reducible field. We illustrate this field in Figure \ref{fig: nut_example}.
\begin{figure}[ht] 
     \centering
     \includegraphics[scale=0.115]{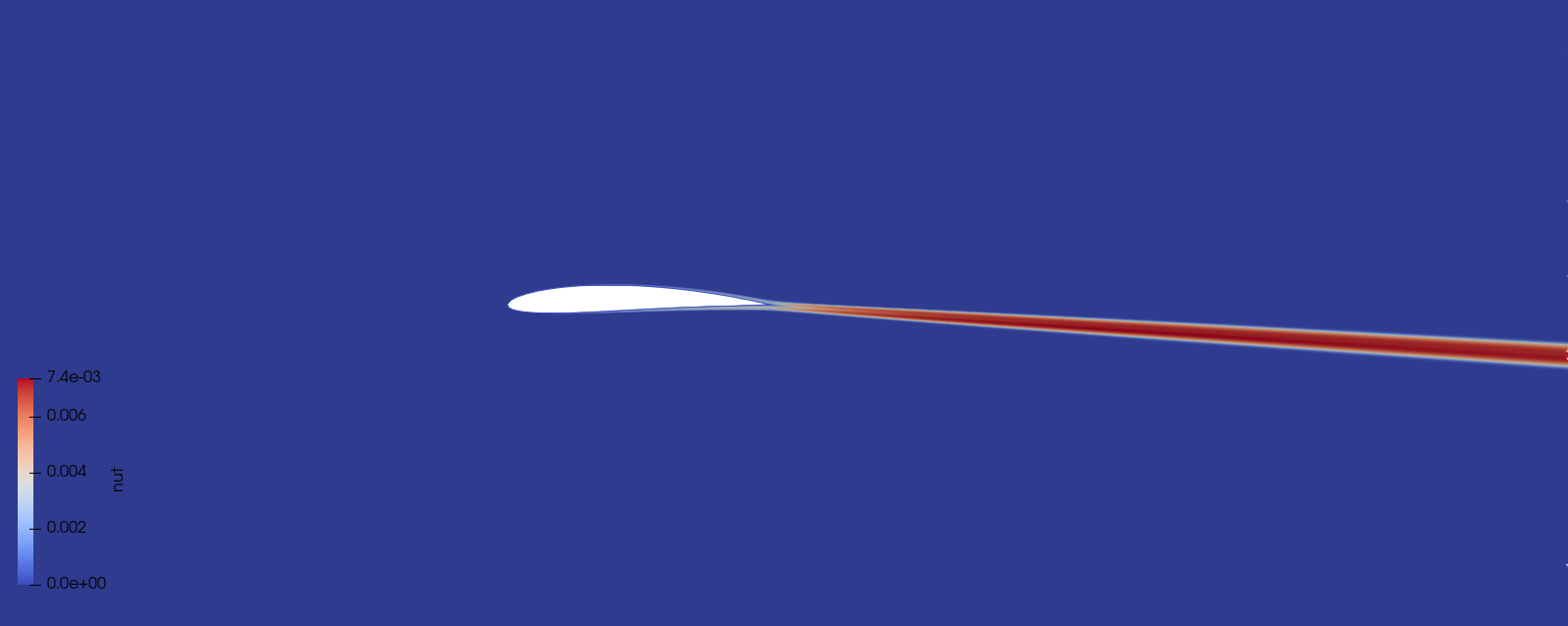}\hfill
     \includegraphics[scale=0.115]{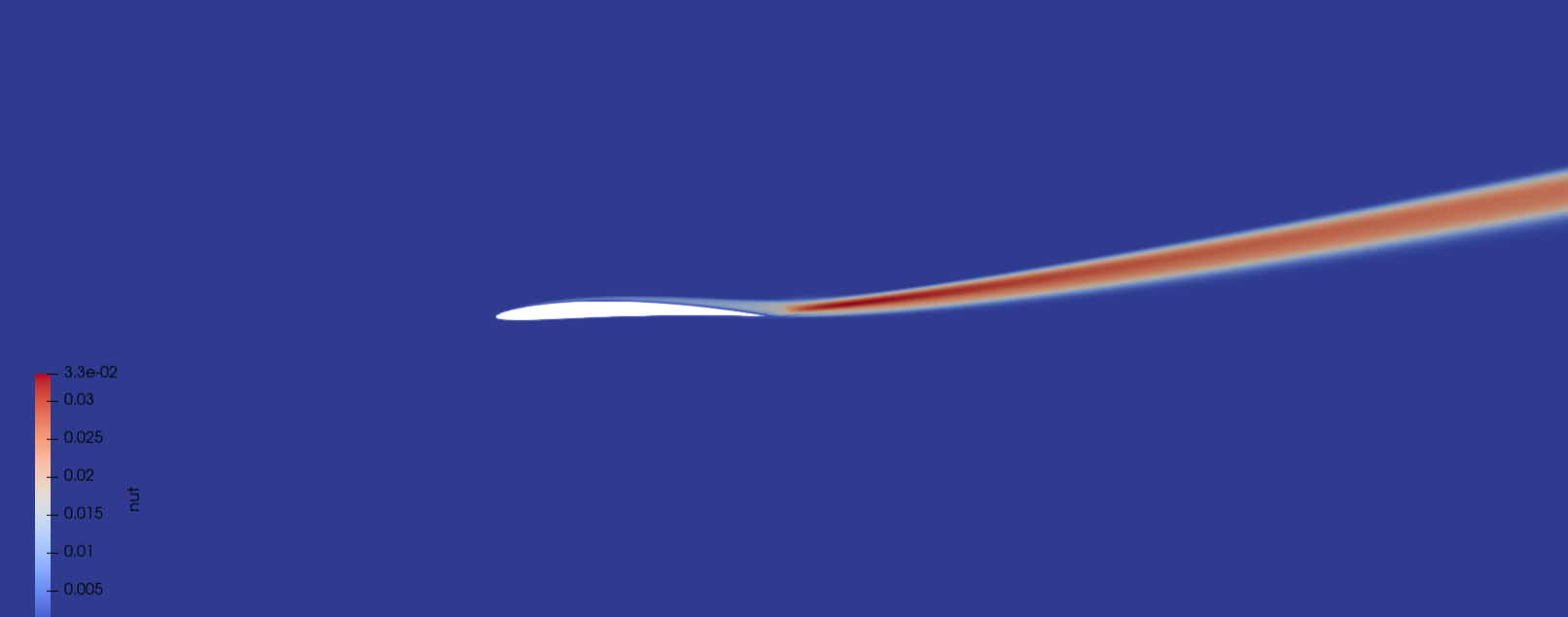}
\caption{ Turbulent kinematic viscosity field illustrated for two of the samples.}
\label{fig: nut_example}
\end{figure}

\begin{figure}[ht] 
     \centering
     \includegraphics[scale=0.1]{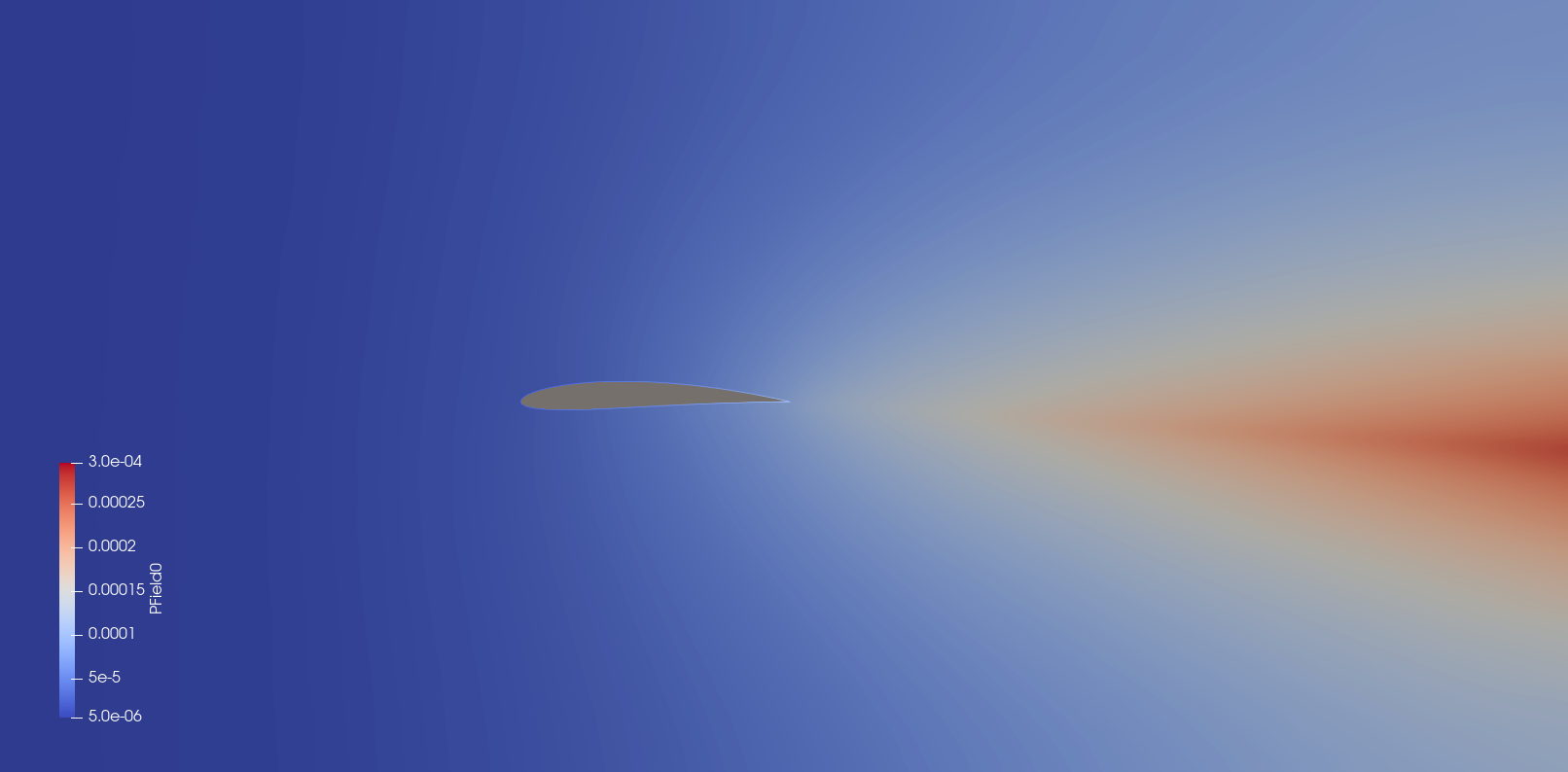}
\caption{The diffused field $\widehat{u}$ for $c_2=1$.}
\label{fig: diffusion ex}
\end{figure}

We run the \Ale{present O-MMGP} algorithm in order to maximize the compression of the turbulent viscosity fields. We choose the reference geometry $\Omega_0$ to be one of the samples in the training set. We then proceed to compute \Ale{geometric morphings} $(\bphi_i^{\rm geo})_{1\leq i \leq n}$ using RBF. \Ale{Then}, we solve the problem \eqref{eq: I definition} for the family $\{\widehat{u}_i(c_2)\}_{1\leq i \leq n}$, using the linear elasticity-based penalty energy $E_{\rm lin}$. Since most of the variation in the fields \Ale{occurs} in the far field and not near the airfoil boundary, we fix the nodes on the airfoil in this example. This allows us to exploit the \Ale{updates} \eqref{gradient update I elasticity} \Ale{and to} assemble the stiffness matrix only once, since the direction of the normal vector on the bounding box does not change through the iterations. We set $r:=1$, $c_1^{(0)}:=0.1$ and $c_2^{(0)}:=1 $. These two latter parameters are changed through the iterations as mentioned \Ale{above}. In Figure \ref{fig: phi opt decay}, we show that the eigenvalues of the correlation matrix of \Ale{optimal morphings} $\{\bphi_i^{\rm opt}\}_{1\leq i \leq n}$ decay rapidly, showing numerically that this family is reducible, and justifying the regression on the optimal morphing POD coordinates. 

\begin{figure}[ht] 
     \centering
     \begin{subfigure}[b]{0.45\textwidth}
         \centering
        \includegraphics[scale=0.4]{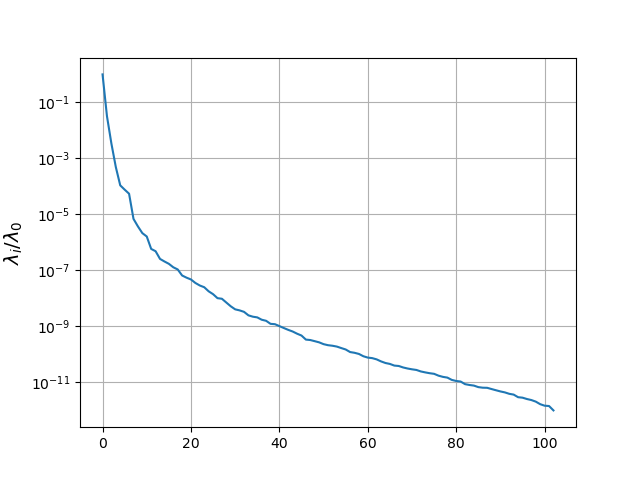}
     \caption{  \label{fig: phi opt decay}
     Decay of the eigenvalues of the correlation matrix of the family $\{\bphi_i^{\rm opt}\}_{1\leq i \leq n}$ .}
     \end{subfigure}
     \hfill
     \begin{subfigure}[b]{0.45\textwidth}
         \centering
\includegraphics[scale=0.4]{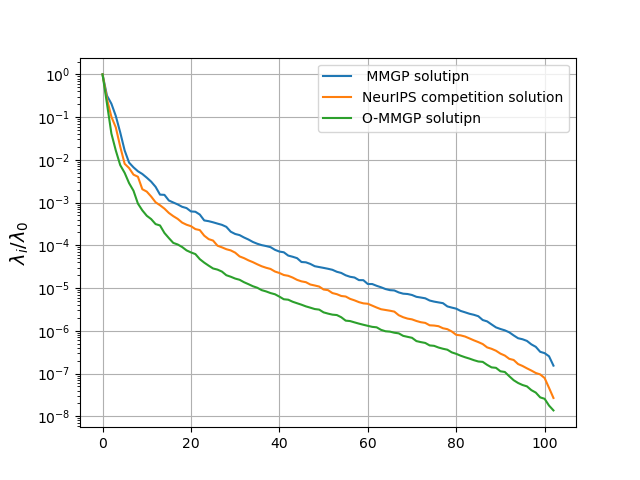}
\caption{\label{fig: nu t decays}
Decay of the eigenvalues of the correlation matrix for the turbulent viscosity fields.}
     \end{subfigure}
\caption{\label{Decay of the eigenvalues} Decay of the eigenvalues.}
\end{figure}

After computing the optimal morphings, we train two Gaussian process regression models, one to learn the optimal morphing $\widetilde{\bphi}$, and the other to learn the field $\widetilde{u}$ (the turbulent viscosity \Ale{obtained}). In order to study the efficiency of the method, we compare the results in the following three \Ale{situations}:
\begin{enumerate}
    \item We apply the method without solving the morphings optimization problem, as in to the original MMGP method. Thus, we predict the turbulent viscosity field in the reference configuration.
    \item We run the winning solution of the NeurIPS 2024 challenge, aligning manually the wake line behind the airfoil.
    \item We apply the O-MMGP procedure described above.
\end{enumerate}
In Table \ref{tab: RMSE nut}, we report the root mean square error (RMSE) of the three tests on the turbulent viscosity field, for the test and OOD sets. While the original MMGP performs poorly for this field, the O-MMGP method produces very accurate predictions, slightly surpassing the NeurIPS 2024 competition solution. The major advantage of \Ale{O-MMGP} is that the alignment of the snapshot \Ale{is} done automatically, without using the specificity of the test case.
\begin{table}[h]
\centering
\begin{tabular}{|c|c|c|c|} 
\hline
 & MMGP & NeurIPS solution & O-MMGP\\
\hline
 Test & 0.143 & 0.025 &  0.024 \\
\hline
 OOD & 0.171   & 0.048 &   0.046   \\
\hline
\end{tabular}
\caption{RMSE errors for the different tests.}
\label{tab: RMSE nut}
\end{table}
In Figure \ref{fig: nu t decays}, we report the decay of the eigenvalues of the correlation matrix for the three methods. For the MMGP solution, no aligning of the fields is performed, which explains the \Ale{poor} decay of the eigenvalues. As expected, the eigenvalues decay most rapidly when using \Ale{O-MMGP}. 

\subsubsection*{Computational cost of the optimal morphing} \label{appendix: cost}

\textbf{Training time:} The training phase of \Ale{O-MMGP} consists of i) computing $\{\bphi_i^{\rm geo}\}_{1\leq i \leq n}$, ii) computing $\{\bphi_i^{\rm  opt}\}_{1\leq i \leq n}$, iii) performing POD and iv) training the Gaussian processes to learn $\bphi^{\rm opt}$ and $u^{\rm opt}$. With respect to MMGP, this adds Step (iii), and one GP training (for $\bphi^{\rm opt}$). For the present test case, steps (i), (iii), and (iv) take about 40 seconds. Step (ii) consists of multiple steps (remeshing the reference mesh to tackle computational bottlenecks, the optimization process, computing $\widetilde{u}(c_2)$ and $\nabla\widetilde{u}(c_2)$ ...). \Ale{Altogether, this step} takes about 35 minutes. We do mention however that some of these steps could be implemented in parallel which would greatly decrease the computational time.\\
\textbf{Inference time:} The inference takes about 81 seconds for the 696 samples (200 from the test set and 496 from the OOD set). Both MMGP and NeurIPS 2024 solutions are faster to train (no optimization problem in the offline pahse), but they are both more expensive than O-MMGP in the online phase. This is due to the fact that both of these methods require the computation of the inverse morphings, as explained in \cite{yagoubi2025neurips}. All the simulations are performed using 128 CPU cores and no GPU. For reference, the full-order solver takes about 25 min for each full simulation.

\subsection{2D\_Profile}
Next, we consider the 2D\_Profile dataset \cite{casenave_2025_15155119}. This dataset consists of 2D steady-state RANS simulations in the supersonic regime with variable geometries that resemble airfoils and propeller blades. The geometries are non-parametrized. The dataset is split into two subsets, a train subset with 300 samples and a test subset with 100 samples. Each sample consists of the computational mesh as an input, and 4 fields of interest as output, which are the Mach number, \Ale{the} pressure and the two components of the velocity field. We show the \Ale{four} fields of two samples in Figure \ref{fig: fields 2dprofile_0}. Most notably, each sample contains a different number of shocks located at different positions, with different orders of magnitude. Furthermore, some samples exhibit a lambda shock (see \Ale{the panels on the second lane} of Figure \ref{fig: fields 2dprofile_0}). We highlight that the variability in the input of this dataset comes from changing the geometry, whereas the inlet, outlet and boundary conditions are kept constant. 
\begin{figure}[h!]
    \centering
         
        \includegraphics[scale=0.2]{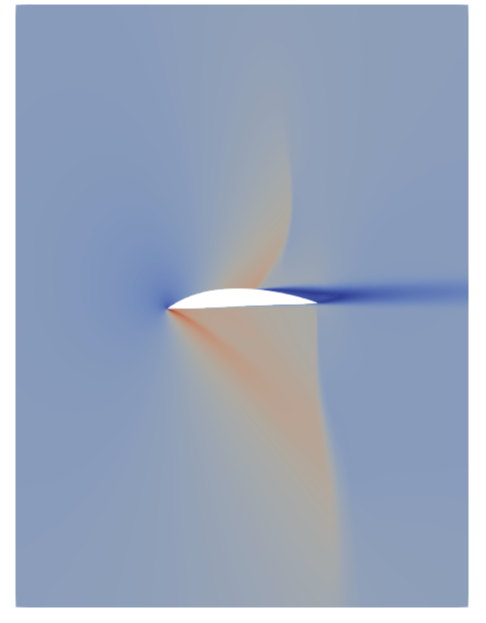} \quad
        \includegraphics[scale=0.2]{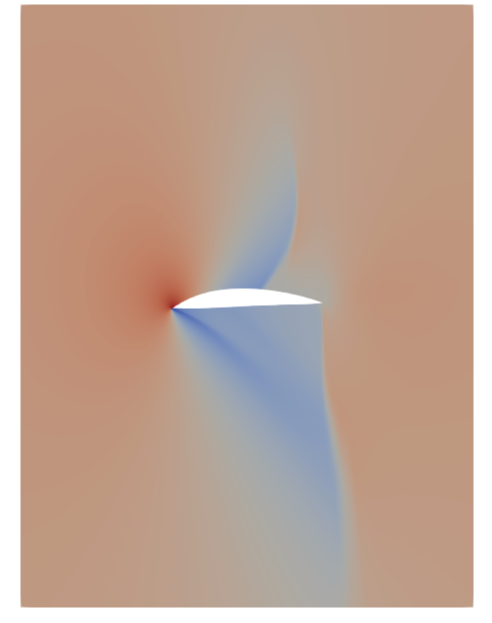} \quad
        \includegraphics[scale=0.2]{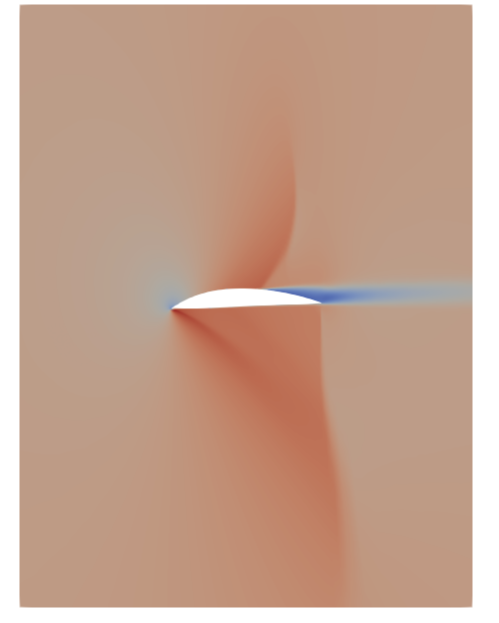} \quad
        \includegraphics[scale=0.2]{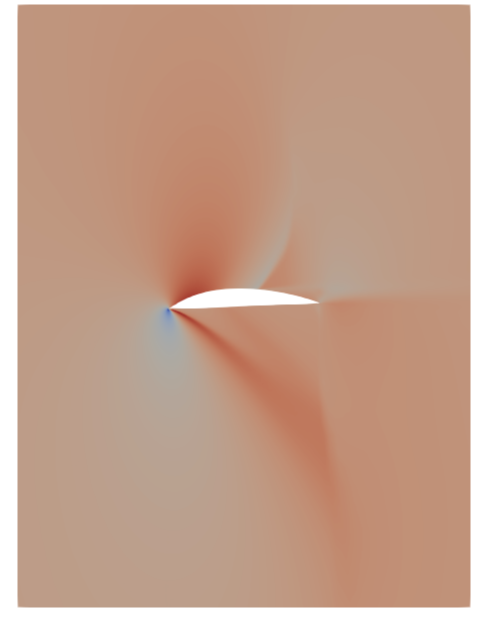} 
        \includegraphics[scale=0.245]{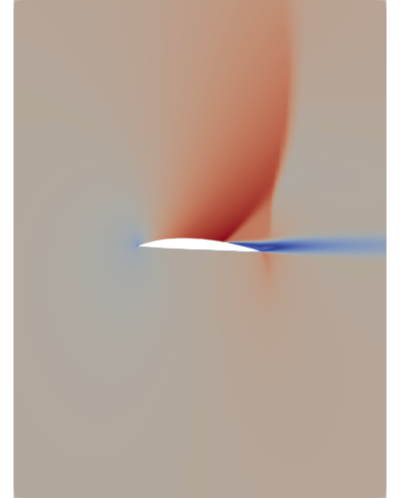} \quad
            \includegraphics[scale=0.245]{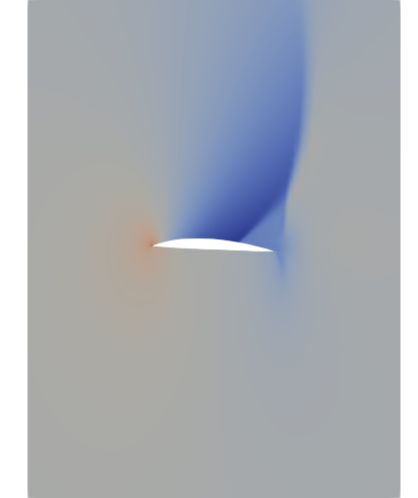} \quad
        \includegraphics[scale=0.245]{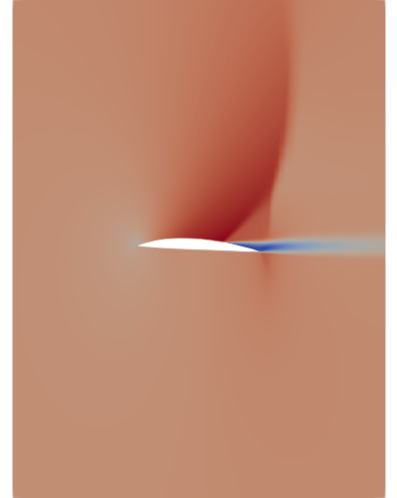} \quad
        \includegraphics[scale=0.245]{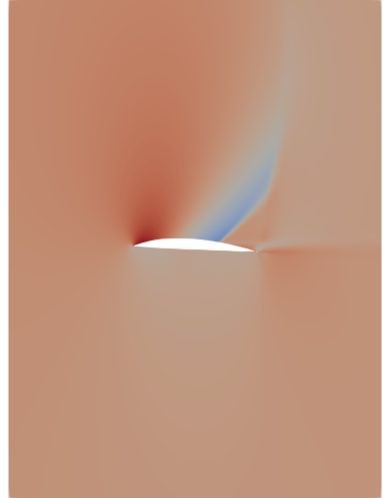}
        
    \caption{The four fields for two samples in the 2D\_Profile dataset. From left to right: Mach number, pressure, $x$-velocity, $y$-velocity.}
    \label{fig: fields 2dprofile_0}
\end{figure}
The computational meshes provided in the dataset are highly anisotropic with different mesh connectivities and number of nodes. We fix one of the samples as a reference domain $\Omega_0$. Since there is no one-to-one correspondence between the meshes, we start by computing a morphing $\bphi_i^{\rm geo}$ from the reference domain onto each target domain $\Omega_i$ in the dataset. The morphings are computed using the elasticity-based morphing technique in \cite{kabalan2025elasticity} with a variable Young modulus $E$ to improve convergence. In order to accelerate the computations, we remesh the reference domain as shown in Figure \ref{fig: refMeshes 2dprofile_0}. \Ale{This} decreases the number of elements from around 70k to 7k. Once the geometric morphings \Ale{are computed}, the morphing optimization algorithm \Ale{is run} same coarse reference mesh. \AK{We test the method for the Mach field and the pressure field.}

\begin{figure}[h!]
    \centering

        \includegraphics[scale=0.25]{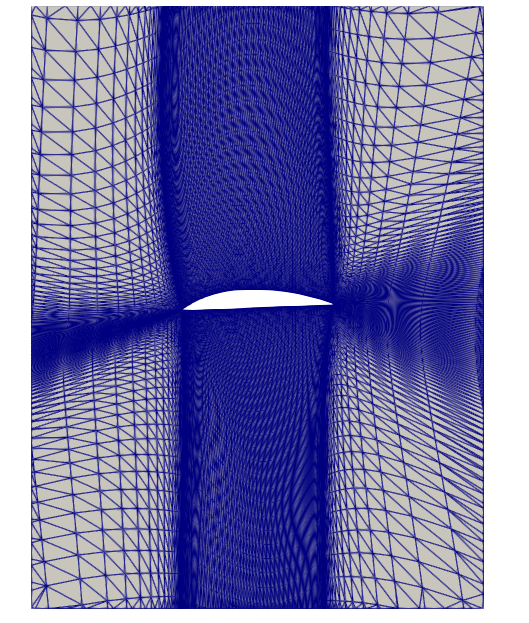} \quad
        \includegraphics[scale=0.25]{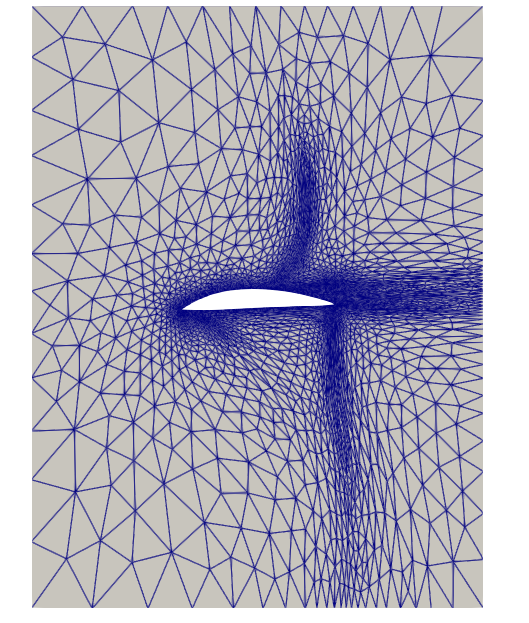} \quad
    \caption{Left: original reference mesh in the dataset. Right: coarse mesh for morphing computation.}
    \label{fig: refMeshes 2dprofile_0}
\end{figure}
\subsubsection{Mach field}
We run the morphing optimization algorithm for the Mach field using the non linear elasticity penalty term $E_{\rm NH}$. We fix $E:=1$ and $\nu:=0.3$ for the linear elasticity bilinear form $a$, used to compute the Riesz representative, $\lambda :=0.5,\mu := 1$ for the non linear penalty term $E_{\rm NH}$, $\epsilon:=0.25$, and $ c_1:=10^{-4}$. We test the method for $s\in\{ 3,4,5,6,8,15,25,30\}$, where $s$ is the number of modes on which we want to project the data. For each value of $s$, we solve

\begin{align} 
    \Phi_s^* \in \arg\max_{\Phi\in \mathbf{M}} I_s[\Phi].
\end{align} 
In Figure \ref{fig: eigenvalues_2dprofile_mach}, we plot $1-J_r[\Phi_s^*]$ as a function of $r$, along with $1-J_r[\Phi^{\rm geo}]$, i.e.~when using only the geometric morphings. We remark that using the optimal morphings is always beneficial in terms of data compression.  
\begin{figure}[h!]
    \centering
        \includegraphics[scale=0.6]{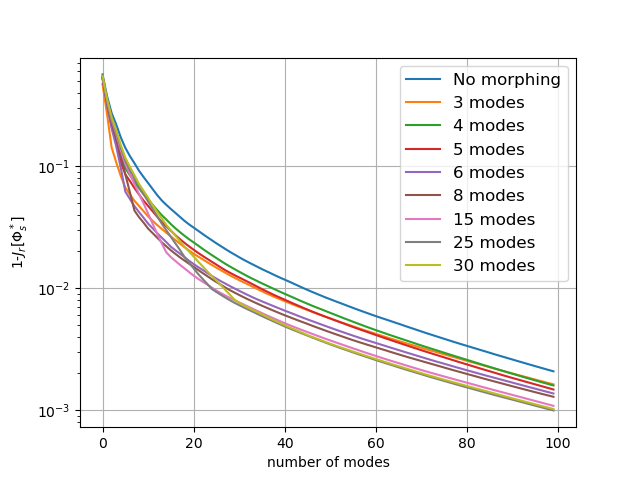}
            \caption{Decay of the eigenvalues of the correlation matrix for the Mach field as a function of the number of POD modes for different values of $s$.}
    \label{fig: eigenvalues_2dprofile_mach}
\end{figure}
Next, we asses the effect of the morphings on the prediction of the Mach field for new (unseen) test samples. We perform POD on $\Phi^{\rm geo}$ and retain $n_{\rm geo}:= 12$ coordinates as inputs to the \Ale{GPR} model. We also perform POD on $\Phi^{\rm opt}$ and retain $n_{\rm opt}:= 32$ modes for the prediction of the optimal morphings for \Ale{the} new samples.  In Figure \ref{fig: 2dprofile_scores_mach}, we report the RMSE for the Mach field on the test set when applying the optimal morphings for different values of $s$, and for different values of the output dimension. From these figures, we can see a clear trend: using the optimal morphings always outperforms using only the geometric morphings. On the other hand, the difference between the different values of $s$ is quite minimal, with $s=6$ giving the best prediction error. We also remark that increasing the output dimension is beneficial up to a certain point where the error start to stagnate. In this case, adding additional modes is no longer beneficial, and only results in increasing the training time.   

\begin{figure}[h!]
    \centering
        \includegraphics[scale=0.45]{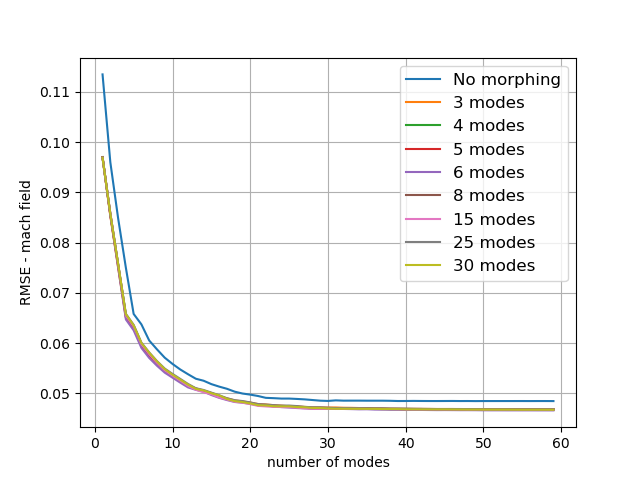} 
        \hfill
         \includegraphics[scale=0.45]{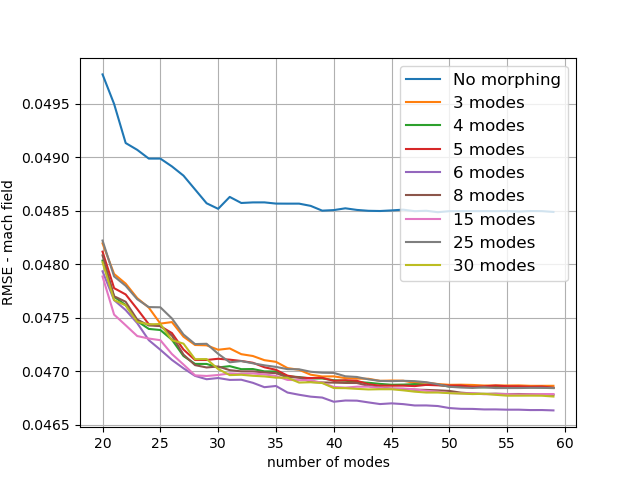}
            \caption{RMSE for the Mach field as a function of the number of POD modes.}
    \label{fig: 2dprofile_scores_mach}
\end{figure}

In Figure \ref{hist: mach error per sample}, we report the number of test samples for different ranges of the prediction error, for both cases with and without optimal morphings. For both cases, most of the samples are almost in the same range. In Figure \ref{fig: test sample 98 - mach}, we plot the true and predicted Mach field for the sample having the highest prediction error. This sample exhibits a lambda shock, which is harder to predict, and which explains the higher prediction error.

\begin{figure}[h!]
    \centering
         
        \includegraphics[scale=0.5]{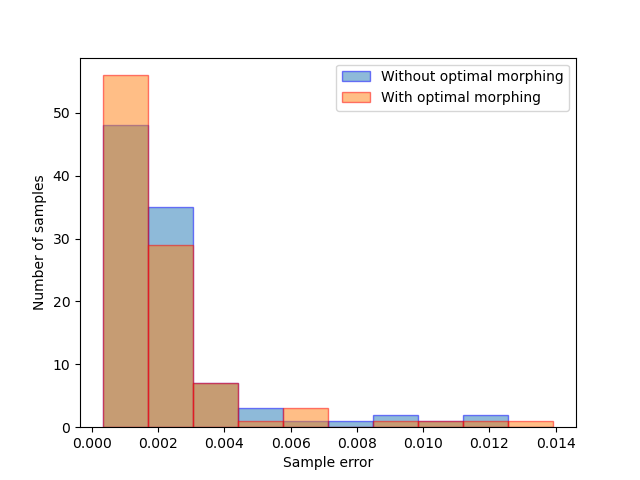} \quad           
    \caption{Number of test samples for different ranges of the prediction error.}
    \label{hist: mach error per sample}
\end{figure}

\begin{figure}[h!]
    \centering

        \includegraphics[scale=0.35]{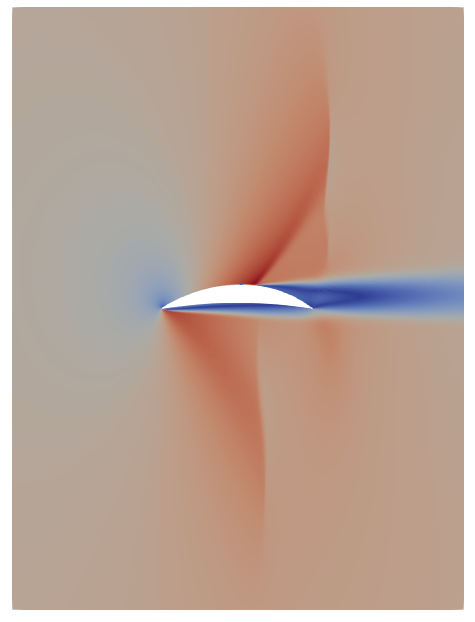} \quad
        \includegraphics[scale=0.35]{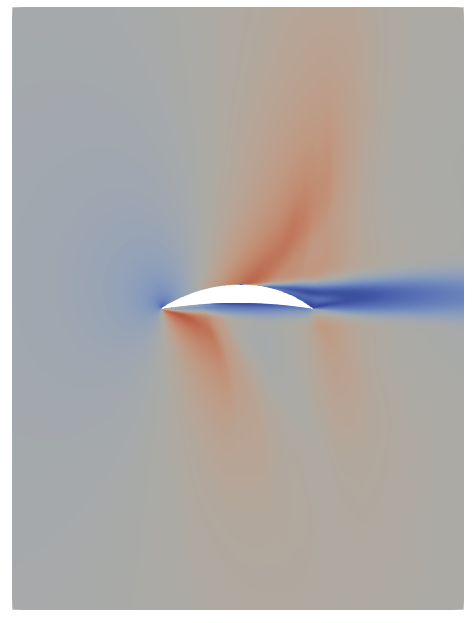} \quad
        \includegraphics[scale=0.35]{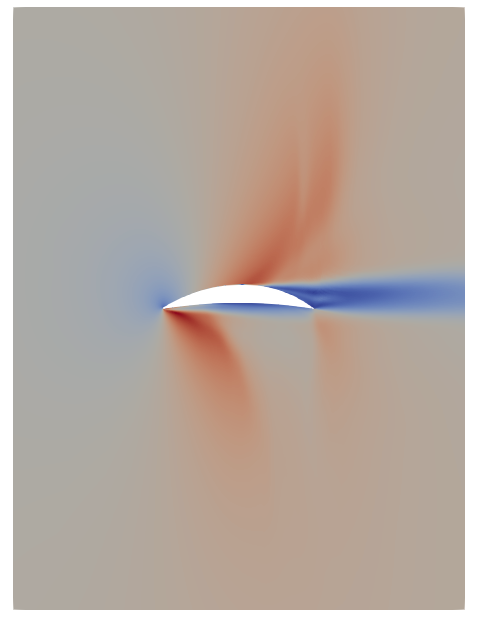} \quad
    \caption{The true and predicted Mach field for the sample having the highest prediction error. Left: True Mach field. Middle: predicted Mach field without optimal morphing. Right: predicted Mach field using the optimal morphing}
    \label{fig: test sample 98 - mach}
\end{figure}

\subsubsection{Pressure field}
We \Ale{now} compute the optimal morphings to maximize the compression of the pressure field. We use the same parameters as before ($E:=1$ and $\nu:=0.3$ for the linear elasticity bilinear form $a$, $\lambda :=0.5, \mu := 1$ for the non linear penalty term $E_{\rm NH}$, $\epsilon:=0.25$, and $ c_1:=10^{-4}$). We test the method for $s \in \{4,6,8,10,15 \}$. In Figure \ref{fig: eigenvalues_2dprofile_pressure}, 
 we plot $1-J_r[\Phi_s^*]$ as a function of $r$, along with $1-J_r[\Phi^{\rm geo}]$. Again, we can see a clear trend: using the optimal morphings is always beneficial in terms of data compression. Next, we plot in Figure \ref{fig: 2dprofile_scores_pressure}, the RMSE for different values of $s$, and for different values of the output dimension. For this field, $s=4$ modes gives the best results with respect to the prediction error. 
 
\begin{figure}[ht]
    \centering
        \includegraphics[scale=0.6]{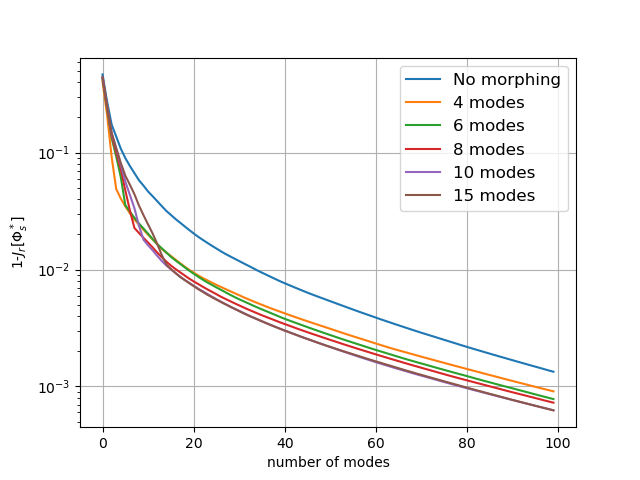}
            \caption{Decay of the eigenvalues of the correlation matrix for the pressure field as a function of the number of POD modes for different values of $s$.}
    \label{fig: eigenvalues_2dprofile_pressure}
\end{figure}

\begin{figure}[ht]
    \centering
        \includegraphics[scale=0.45]{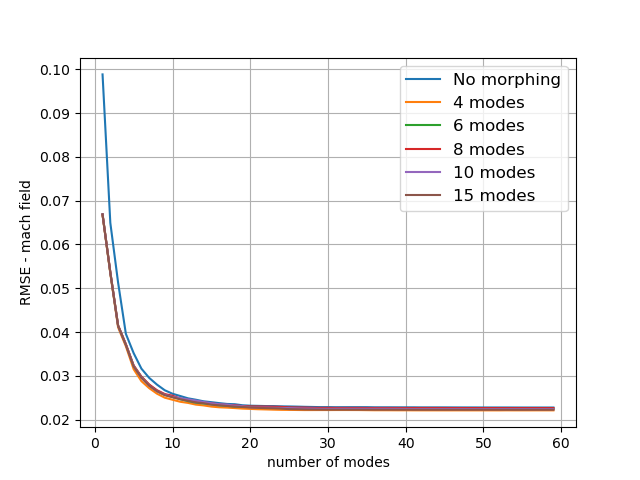} 
        \hfill
         \includegraphics[scale=0.45]{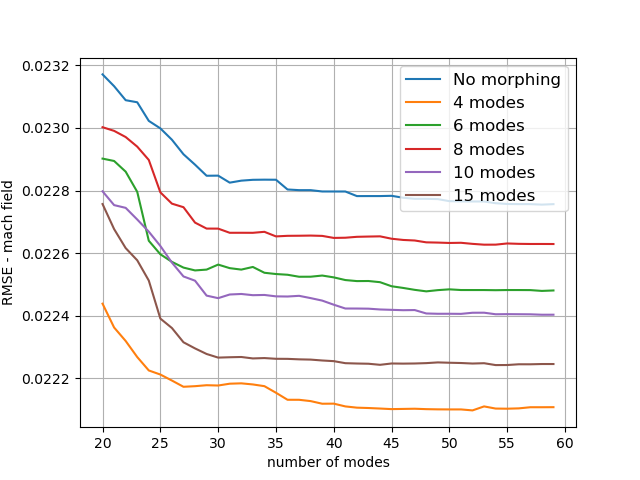}
            \caption{RMSE for the Mach field as a function of number of POD modes.}
    \label{fig: 2dprofile_scores_pressure}
\end{figure}

\FloatBarrier


         



\section{Conclusion}
In this paper, we introduced an optimal morphing framework for model-order reduction targeting poorly reducible problems with geometric variability. Unlike traditional projection-based model-order reduction approaches, which rely on linear subspaces and suffer from the slow decay of the Kolmogorov N-width, the proposed method enhances reducibility by optimally aligning solution fields through morphings. We formulated the morphing computation as a global optimization problem, maximizing the energy captured by the first $r$ POD modes of the transformed snapshots. To solve the optimization problem, we developed a gradient ascent algorithm based on an elasticity-based inner product that ensures smooth and bijective morphings. Additional penalty and continuation strategies were introduced to enforce bijectivity and improve convergence robustness. We tested the method through several numerical examples including synthetic transport problems, advection-reaction equation, and complex CFD datasets such as the AirfRANS and 2D\_Profile cases. The framework demonstrated its capability to enhance data compression, reduce model error, and automate feature alignment without manual intervention. When integrated with GPR in the proposed O-MMGP framework, it enabled enhanced predictions and efficient non-intrusive surrogate modeling across geometrically variable configurations.

\newpage

\bibliographystyle{plain}
\bibliography{bib_thesis}

\begin{thebibliography}{10}

\bibitem{amsallem2008interpolation}
David Amsallem and Charbel Farhat.
\newblock Interpolation method for adapting reduced-order models and application to aeroelasticity.
\newblock {\em AIAA J.}, 46(7):1803--1813, 2008.

\bibitem{amsallem2012nonlinear}
David Amsallem, Matthew~J Zahr, and Charbel Farhat.
\newblock Nonlinear model order reduction based on local reduced-order bases.
\newblock {\em Int. J. Numer. Methods Eng.}, 92(10):891--916, 2012.

\bibitem{baker2002mesh}
Timothy~J. Baker.
\newblock Mesh movement and metamorphosis.
\newblock {\em Eng. Comput.}, 18(3):188--198, 2002.

\bibitem{barnett2023neural}
Joshua Barnett, Charbel Farhat, and Yvon Maday.
\newblock Neural-network-augmented projection-based model order reduction for mitigating the kolmogorov barrier to reducibility.
\newblock {\em J. Comput. Phys.}, 492:112420, 2023.

\bibitem{beg2005computing}
M.~Faisal Beg, Michael~I. Miller, Alain Trouv{\'e}, and Laurent Younes.
\newblock Computing large deformation metric mappings via geodesic flows of diffeomorphisms.
\newblock {\em Int. J. Comput. Vis.}, 61:139--157, 2005.

\bibitem{buhr2018randomized}
Andreas Buhr and Kathrin Smetana.
\newblock Randomized local model order reduction.
\newblock {\em SIAM J. Sci. Comput.}, 40(4):A2120--A2151, 2018.

\bibitem{cagniart2018model}
Nicolas Cagniart, Yvon Maday, and Benjamin Stamm.
\newblock Model order reduction for problems with large convection effects.
\newblock In {\em Contributions to partial differential equations and applications}, pages 131--150. Springer, 2018.

\bibitem{casenave_2025_15155119}
Fabien Casenave and Nissrine Akkari.
\newblock 2d\_profile: 2d external aero cfd rans dataset, under geometrical variations, April 2025.

\bibitem{casenave2024mmgp}
Fabien Casenave, Brian Staber, and Xavier Roynard.
\newblock {MMGP}: a mesh morphing gaussian process-based machine learning method for regression of physical problems under nonparametrized geometrical variability.
\newblock {\em Adv. Neural Inf. Process. Syst.}, 36, 2024.

\bibitem{cote2023hamiltonian}
Rapha{\"e}l C{\^o}te, Emmanuel Franck, Laurent Navoret, Guillaume Steimer, and Vincent Vigon.
\newblock Hamiltonian reduction using a convolutional auto-encoder coupled to an hamiltonian neural network.
\newblock {\em arXiv preprint arXiv:2311.06104}, 2023.

\bibitem{cucchiara2024model}
Simona Cucchiara, Angelo Iollo, Tommaso Taddei, and Haysam Telib.
\newblock Model order reduction by convex displacement interpolation.
\newblock {\em J. Comput. Phys.}, page 113230, 2024.

\bibitem{daniel2020model}
Thomas Daniel, Fabien Casenave, Nissrine Akkari, and David Ryckelynck.
\newblock Model order reduction assisted by deep neural networks (rom-net).
\newblock {\em Adv. Model. Simul. Eng. Sci.}, 7:1--27, 2020.

\bibitem{de2007mesh}
Aukje De~Boer, Martijn~S. Van~der Schoot, and Hester Bijl.
\newblock Mesh deformation based on radial basis function interpolation.
\newblock {\em Comput. Struct.}, 85(11-14):784--795, 2007.

\bibitem{de2016optimization}
Maya De~Buhan, Charles Dapogny, Pascal Frey, and Chiara Nardoni.
\newblock An optimization method for elastic shape matching.
\newblock {\em C. R. Math.}, 354(8):783--787, 2016.

\bibitem{dihlmann2011model}
Markus Dihlmann, Martin Drohmann, and Bernard Haasdonk.
\newblock Model reduction of parametrized evolution problems using the reduced basis method with adaptive time-partitioning.
\newblock {\em Proc. of ADMOS}, 2011:64, 2011.

\bibitem{fresca2021comprehensive}
Stefania Fresca, Luca Dede’, and Andrea Manzoni.
\newblock A comprehensive deep learning-based approach to reduced order modeling of nonlinear time-dependent parametrized pdes.
\newblock {\em J. Sci. Comput.}, 87:1--36, 2021.

\bibitem{fresca2022pod}
Stefania Fresca and Andrea Manzoni.
\newblock Pod-dl-rom: Enhancing deep learning-based reduced order models for nonlinear parametrized pdes by proper orthogonal decomposition.
\newblock {\em Comput. Methods Appl. Mech. Eng.}, 388:114181, 2022.

\bibitem{gowrachari2025model}
Harshith Gowrachari, Giovanni Stabile, and Gianluigi Rozza.
\newblock Model reduction for transport-dominated problems via cross-correlation based snapshot registration.
\newblock {\em arXiv preprint}, arXiv:2501.01299, 2025.

\bibitem{kabalan2025elasticity}
Abbas Kabalan, Fabien Casenave, Felipe Bordeu, Virginie Ehrlacher, and Alexandre Ern.
\newblock Elasticity-based morphing technique and application to reduced-order modeling.
\newblock {\em Applied Mathematical Modelling}, 141:115929, 2025.

\bibitem{lee2020model}
Kookjin Lee and Kevin~T Carlberg.
\newblock Model reduction of dynamical systems on nonlinear manifolds using deep convolutional autoencoders.
\newblock {\em J. Comput. Phys.}, 404:108973, 2020.

\bibitem{masud2007adaptive}
Arif Masud, Manish Bhanabhagvanwala, and Rooh~A. Khurram.
\newblock An adaptive mesh rezoning scheme for moving boundary flows and fluid--structure interaction.
\newblock {\em Comput. Fluids}, 36(1):77--91, 2007.

\bibitem{mirhoseini2023model}
Marzieh~Alireza Mirhoseini and Matthew~J Zahr.
\newblock Model reduction of convection-dominated partial differential equations via optimization-based implicit feature tracking.
\newblock {\em J. Comput. Phys.}, 473:111739, 2023.

\bibitem{mojgani2021low}
Rambod Mojgani and Maciej Balajewicz.
\newblock Low-rank registration based manifolds for convection-dominated pdes.
\newblock In {\em Proceedings of the AAAI Conference on Artificial Intelligence}, volume~35, pages 399--407, 2021.

\bibitem{mojgani2023kolmogorov}
Rambod Mojgani, Maciej Balajewicz, and Pedram Hassanzadeh.
\newblock Kolmogorov n--width and lagrangian physics-informed neural networks: A causality-conforming manifold for convection-dominated pdes.
\newblock {\em Comput. Methods Appl. Mech. Eng.}, 404:115810, 2023.

\bibitem{mosquera2021generalization}
Rolando Mosquera, Abdallah El~Hamidi, Aziz Hamdouni, and Antoine Falaize.
\newblock Generalization of the neville--aitken interpolation algorithm on grassmann manifolds: Applications to reduced order model.
\newblock {\em Int. J. Numer. Methods Fluids}, 93(7):2421--2442, 2021.

\bibitem{mosquera2019pod}
Rolando Mosquera, Aziz Hamdouni, Abdallah~El Hamidi, and Cyrille Allery.
\newblock Pod basis interpolation via inverse distance weighting on grassmann manifolds.
\newblock {\em Discrete Contin. Dyn. Syst. Ser. S}, 12(6), 2019.

\bibitem{nair2019transported}
Nirmal~J Nair and Maciej Balajewicz.
\newblock Transported snapshot model order reduction approach for parametric, steady-state fluid flows containing parameter-dependent shocks.
\newblock {\em Int. J. Numer. Methods Eng.}, 117(12):1234--1262, 2019.

\bibitem{nouy2024dictionary}
Anthony Nouy and Alexandre Pasco.
\newblock Dictionary-based model reduction for state estimation.
\newblock {\em Adv. Comput. Math.}, 50(3):32, 2024.

\bibitem{ohlberger2013nonlinear}
Mario Ohlberger and Stephan Rave.
\newblock Nonlinear reduced basis approximation of parameterized evolution equations via the method of freezing.
\newblock {\em C. R. Math.}, 351(23-24):901--906, 2013.

\bibitem{rapun2010reduced}
Mar{\'\i}a-Luisa Rap{\'u}n and Jos{\'e}~M Vega.
\newblock Reduced order models based on local pod plus galerkin projection.
\newblock {\em J. Comput. Phys.}, 229(8):3046--3063, 2010.

\bibitem{rozza2008reduced}
Gianluigi Rozza, Dinh Bao~Phuong Huynh, and Anthony~T. Patera.
\newblock Reduced basis approximation and a posteriori error estimation for affinely parametrized elliptic coercive partial differential equations: Application to transport and continuum mechanics.
\newblock {\em Arch. Comput. Methods Eng.}, 15(3):229--275, 2008.

\bibitem{sederberg1986free}
Thomas~W. Sederberg and Scott~R. Parry.
\newblock Free-form deformation of solid geometric models.
\newblock In {\em Proc. 13th Annu. Conf. Comput. Graph. Interact. Tech.}, pages 151--160, 1986.

\bibitem{shontz2012robust}
Suzanne~M. Shontz and Stephen~A. Vavasis.
\newblock A robust solution procedure for hyperelastic solids with large boundary deformation.
\newblock {\em Eng. Comput.}, 28:135--147, 2012.

\bibitem{sieger2014rbf}
Daniel Sieger, Stefan Menzel, and Mario Botsch.
\newblock Rbf morphing techniques for simulation-based design optimization.
\newblock {\em Eng. Comput.}, 30:161--174, 2014.

\bibitem{staten2012comparison}
Matthew~L Staten, Steven~J Owen, Suzanne~M Shontz, Andrew~G Salinger, and Todd~S Coffey.
\newblock A comparison of mesh morphing methods for 3 d shape optimization.
\newblock In {\em Proceedings of the 20th international meshing roundtable}, pages 293--311. Springer, 2012.

\bibitem{taddei2020registration}
Tommaso Taddei.
\newblock A registration method for model order reduction: data compression and geometry reduction.
\newblock {\em SIAM J. Sci. Comput.}, 42(2):A997--A1027, 2020.

\bibitem{taddei2023compositional}
Tommaso Taddei.
\newblock Compositional maps for registration in complex geometries.
\newblock {\em arXiv:2308.15307}, 2023.

\bibitem{welper2017interpolation}
Gerrit Welper.
\newblock Interpolation of functions with parameter dependent jumps by transformed snapshots.
\newblock {\em SIAM J. Sci. Comput.}, 39(4):A1225--A1250, 2017.

\bibitem{welper2020transformed}
Gerrit Welper.
\newblock Transformed snapshot interpolation with high resolution transforms.
\newblock {\em SIAM J. Sci. Comput.}, 42(4):A2037--A2061, 2020.

\bibitem{rasmussen2006gaussian}
Christopher~Ki Williams and Carl~Edward Rasmussen.
\newblock {\em Gaussian Processes for Machine Learning}.
\newblock MIT Press, 2006.

\bibitem{yagoubi2024neurips}
Mouadh Yagoubi, David Danan, Milad Leyli-Abadi, Jean-Patrick Brunet, Jocelyn~Ahmed Mazari, Florent Bonnet, Asma Farjallah, Paola Cinnella, Patrick Gallinari, Marc Schoenauer, et~al.
\newblock Neurips 2024 ml4cfd competition: Harnessing machine learning for computational fluid dynamics in airfoil design.
\newblock {\em arXiv preprint arXiv:2407.01641}, 2024.

\bibitem{yagoubi2025neurips}
Mouadh Yagoubi, David Danan, Milad Leyli-Abadi, Ahmed Mazari, Jean-Patrick Brunet, Abbas Kabalan, Fabien Casenave, Yuxin Ma, Giovanni Catalani, Jean Fesquet, et~al.
\newblock Neurips 2024 ml4cfd competition: Results and retrospective analysis.
\newblock {\em arXiv preprint arXiv:2506.08516}, 2025.

\bibitem{ye2024data}
Dongwei Ye, Valeria Krzhizhanovskaya, and Alfons~G. Hoekstra.
\newblock Data-driven reduced-order modelling for blood flow simulations with geometry-informed snapshots.
\newblock {\em J. Comput. Phys.}, 497:112639, 2024.

\bibitem{zimmermann2018geometric}
Ralf Zimmermann, Benjamin Peherstorfer, and Karen Willcox.
\newblock Geometric subspace updates with applications to online adaptive nonlinear model reduction.
\newblock {\em SIAM J. Matrix Anal. Appl.}, 39(1):234--261, 2018.

\end{thebibliography}

\end{document}